\numberwithin{equation}{section}
\newcommand*\Let[2]{\State #1 $\gets$ #2}
\newtheorem{theorem}{Theorem}[section]
\newtheorem{lemma}[theorem]{Lemma}
\newtheorem{proposition}[theorem]{Proposition}
\newtheorem {remark}[theorem]{Remark}
\newtheorem {problem}[theorem]{Inverse Problem}
\newcommand{\R}{\mathbb{R}}
\newcommand{\supp}{\mathrm{supp}}
\newcommand{\xibf}{\mbox{\boldmath $\xi$}}
\newcommand{\xbf}{\mbox{\boldmath $x$}}
\DeclareMathOperator{\bndry}{ {\partial\Omega} }
\newcommand{\tredbf}[1]{\textcolor{red}{\textbf{#1}}}
\begin{document}
\title[Revealing cracks with surface data]{Revealing cracks inside conductive bodies by electric surface measurements}
\author[A. Hauptmann, M. Ikehata, H. Itou, and S. Siltanen]{Andreas Hauptmann$^1$, Masaru Ikehata$^2$, Hiromichi Itou$^3$, and Samuli Siltanen$^4$}
\thanks{\noindent $^1$ Department of Computer Science, University College London, United Kingdom}
\thanks{$^2$ Laboratory of Mathematics, Graduate School of Engineering, Hiroshima University, Japan}
\thanks{$^3$ Department of Mathematics, Tokyo University of Science, Japan}
\thanks{$^4$ Department of Mathematics and Statistics, University of Helsinki, Finland}
\date{\today}

\begin{abstract}
An algorithm is introduced for using electrical surface measurements to detect and monitor cracks inside a two-dimensional conductive body. The technique is based on transforming the probing functions of the classical {\it enclosure method} by the Kelvin transform. The transform makes it possible to use virtual discs for probing the interior of the body using electric measurements performed on a flat surface. Theoretical results are presented to enable probing of the full domain to create a profile indicating cracks in the domain. Feasibility of the method is demonstrated with a simulated model of attaching metal sheets together by resistance spot welding.	
\end{abstract}
\maketitle

\section{Introduction}
\noindent
Resistance Spot Welding (RSW) is an established technique for joining two metallic pieces together in industrial assembly lines. The principle is  simple: an electric current is applied while compressing the metallic materials. The metal melts by the resistance heat, and a molten nugget is produced near the faying surface. This results in the pieces being joined together. 

RSW is often performed autonomously by robots. Therefore, an automatic quality control would be useful. We extend the approach introduced in \cite{Ikehata2016}, a theoretical study of using electrical boundary measurements for probing the interior of a conductive body for cracks. The technique is based on transforming the indicator function of the classical {\it enclosure method} \cite{Ikehata2000c} by the Kelvin transform. The transform makes it possible to use virtual discs for probing the interior of the body using electric measurements performed on a flat surface. For a fixed crack location, the method only needs one boundary measurement. 

There are two main novelties in this paper compared to \cite{Ikehata2016}. We provide additional theoretical foundations for the method. Furthermore, we demonstrate computationally that the Kelvin transformed enclosure method can be used for robust detection of multiple crack locations. The computational procedure seeks to compute a profile of the metal slab, where crack tips are indicated as local maxima.

Mathematically the problem under consideration can be modeled as follows. First of all, for simplicity we focus this study on a two-dimensional formulation. Then the two metal slabs, stacked on top of each other, are modeled by a bounded and rectangular domain $\Omega\subset\R^{2}$.
We fix the Cartesian coordinates such that $\Omega=]0,\,a[\times\,]0,b[$ with $a>0$ and $b>0$. 
Further, let $c\in]0,b[$ denote the vertical location of the boundary between the two metal pieces, as illustrated in Figure \ref{fig:Intro}. The part where the plates are not joined is denoted by $\Sigma\subset ]0,a[\times \{c\}$, which we call the crack in $\Omega$, additionally we denote the upper/lower parts by $\Omega^{\pm}=\Omega\cap\{x\in\R^2\,\vert\,\pm(x_2-c)>0\}.$ Before the RSW procedure $\Sigma$ will strictly divide the two plates, after the procedure is started the plates will join and $\Sigma$ will consist of disjoint sets of cracks. As illustrated in Figure \ref{fig:Intro}, the profile computed by the Kelvin transformed enclosure method then indicates the crack tips after RSW as local maxima.

\begin{figure}[h!]
\centering
\begin{picture}(350,100)

\put(0,50){\Large{\line(1,0){150}}}
\put(0,50){\Large{\line(0,1){50}}}
\put(150,50){\Large{\line(0,1){50}}}
\put(0,100){\Large{\line(1,0){150}}}
{\linethickness{0.3mm}
\put(0,75){\Large{\line(1,0){150}}}

}

\put(40,78){\large{$\Sigma$}}
\put(90,85){\large{$\Omega$}}
\put(-8,72){\small{$c$}}

\put(162,50){\large{RSW}}
\put(160,45){\Large{\vector(1,0){30}}}

\put(200,50){\Large{\line(1,0){150}}}
\put(200,50){\Large{\line(0,1){50}}}
\put(350,50){\Large{\line(0,1){50}}}
\put(200,100){\Large{\line(1,0){150}}}
{\linethickness{0.3mm}
\put(200,75){\Large{\line(1,0){45}}}
\put(285,75){\Large{\line(1,0){65}}}
}
\multiput(250,75)(12,0){3}{\line(1,0){5}}

\put(220,78){\large{$\Sigma$}}
\put(290,85){\large{$\Omega$}}

\put(-3,5){\includegraphics[width=158pt]{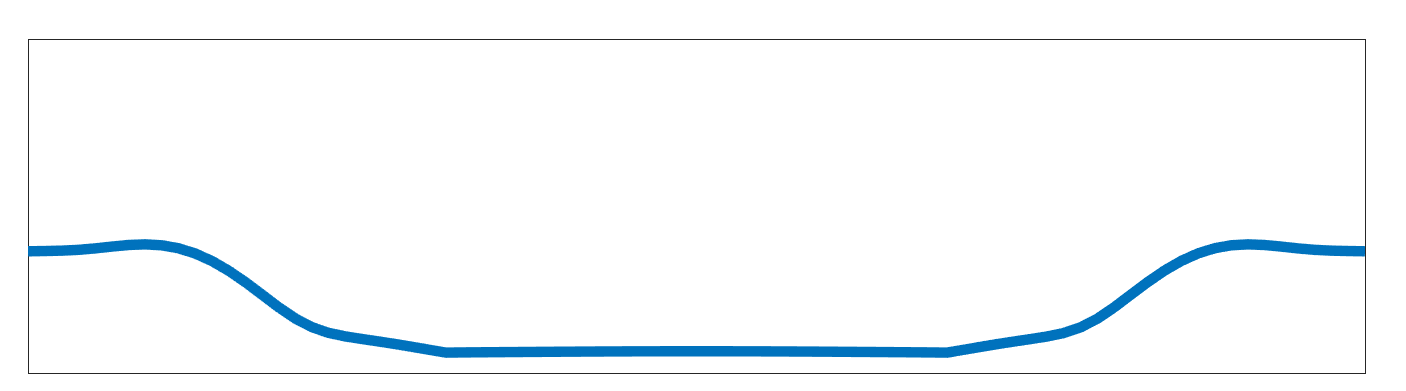}}
\put(197,5){\includegraphics[width=158pt]{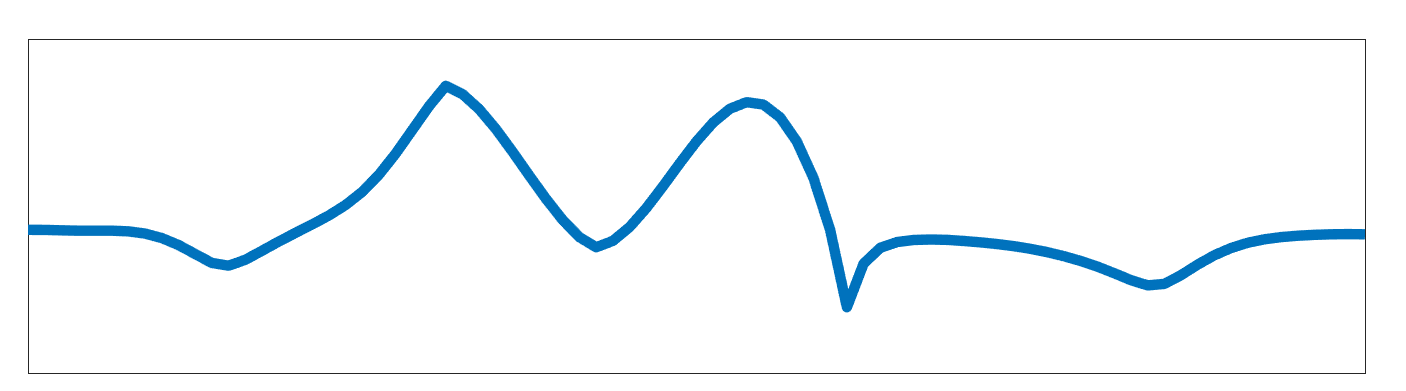}}

\end{picture}
\caption{\label{fig:Intro} Illustration of the problem setting: $\Omega$ models the two stacked metal slabs to be joined by resistance spot welding (RSW) and $\Sigma$ denotes the set of cracks between the two plates. During the welding procedure the plates join and the task is to detect the edges of the remaining cracks to monitor the quality of welding. The second row shows the obtained probing profile indicating the crack tips with the proposed method.}
\end{figure} 

The physical processes can be modeled by a conductivity equation. Let us assume that the two plates consist of a common isotropic conductive medium having a constant conductivity $\sigma$.
Without loss of generality, one may assume that $\sigma=1$.
Let $ \mbox{\boldmath $\nu$} $ denote the unit outward normal vector field on $\partial\Omega $.
Given a mean-free electric current density $g\in L^2(\partial\Omega)$, i.e. $\displaystyle\int_{\partial\Omega}g\; {\rm d}s_{\mbox{\boldmath $x$}}=0$, then the resulting voltage inside $\Omega$ satisfies

\begin{eqnarray}
\left\{ \begin{array}{ll}
\vspace{0.3cm}
\Delta u=0 & \text{in $\Omega\setminus\Sigma$,}\\
\vspace{0.3cm}
\frac{\partial u}{\partial x_2}=0 & \text{on $\Sigma$},\\
\frac{\partial u}{\partial\mbox{\boldmath $\nu$}}=g & \text{on $\partial\Omega$.}
\end{array}
\right.\label{eq:0}
\end{eqnarray}
Here we assume that $u\in H^1(\Omega\setminus\Sigma)$ is the weak solution of \eqref{eq:0} in the sense that it satisfies
\begin{eqnarray}
\int_{\Omega\setminus\Sigma} \nabla u \cdot \nabla \varphi\;{\rm d}\mbox{\boldmath $x$}=\int_{\partial\Omega} g\varphi\;{\rm d}s_{\mbox{\boldmath $x$}}
\label{eq:1}
\end{eqnarray}
for all $ \varphi\in H^{1}(\Omega\setminus\Sigma) $. Let us further assume $\int_{\partial\Omega}u\; {\rm d}s_{\mbox{\boldmath $x$}}=0$ to assure uniqueness.
The condition on $\Sigma$ in \eqref{eq:0} implies that there is no current flux through the cracks. Thus, it is the place where the two plates are not joined.

Succeeding to \cite{Ikehata2016}, in this paper we consider the following inverse problem.
\begin{problem}
Fix $ g\neq 0 $.
Extract information about the location of $ \Sigma $ from the knowledge of a single set of Cauchy data $(g,u)$ on $ \partial \Omega $.
\end{problem}

This is an example of a so-called inverse crack problem using a single set of the Cauchy data.
Note that $\Sigma$ plays the role of the set of unknown cracks.
We emphasize that the problem asks to seek an {\it extraction procedure} of information about the geometry of $\Sigma$.
Recently an extraction procedure has been established in \cite{Ikehata2016} by combining the enclosure method \cite{Ikehata2000c} with a Kelvin transform. The main result of \cite{Ikehata2016} is concerned with the asymptotic behavior of an {\it indicator function} which can be computed by using the Cauchy data $u$ and $g$ on $\partial\Omega$.

The inverse crack problem considered in this paper is of the type: 
we know that the crack is located on a known line.
Alternatively, if a crack lies on an unknown line (or plane in three-dimensions) and completely embedded in the domain, uniqueness and stability results using a single set of the Cauchy data have been established in \cite{Alessandrini1997}.
See also \cite{Alessandrini2013} for a recent study on the stability issue. 
A reconstruction formula of the crack on a single unknown plane embedded in a reference medium which is based on the reciprocity gap method with a single set of Cauchy data has been established in \cite{Andrieux1996}.
Note that therein they assume that the integral of the jump of the potential over the crack does not vanish. 
Furthermore, in \cite{Isakov1995} a uniqueness theorem has been established for the {\it surface breaking crack} which includes our case as a special case.
See also the classical result in \cite{Friedman1989a} and a reconstruction method \cite{Aparicio1996}.  
These are concerned with inverse crack problems with a single or two sets of Cauchy data.
Another approach employs the Neumann-to-Dirichlet map (or Dirichlet-to-Neumann map) or its localization on the surface of the domain. In this case the fundamental uniqueness result for the crack having general shape embedded in the domain is established in \cite{Eller1996}. 
Reconstruction methods for cracks having a general shape are the probe method \cite{Ikehata2006, Ikehata2003c}, the enclosure method \cite{Ikehata2008} and the factorization method \cite{Boukari2013, Bruhl2001b}.

The first computational implementation of the enclosure method using the Dirichlet-to-Neumann map or Neumann-to-Dirichlet map was done by \cite{Ikehata2000a} and \cite{Bruhl2000} for EIT to reconstruct the convex hull of an inclusion. For an implementation of the enclosure method with non convex inclusions see \cite{Ikehata2004}.
The enclosure method with a single set of the Dirichlet and Neumann data is demonstrated in \cite{Ikehata2002} and \cite{Ikehata2002c} which treated inclusions and cavities, respectively.

This paper concentrates on a two-dimensional formulation of the crack detection problem. Therefore, our results are not directly applicable to the actual three-dimensional situation arising in practical spot welding. However, understanding the two-dimensional case both theoretically and computationally paves the way to a three-dimensional extension of the method.

The outline of this paper is as follows.  The theoretical background and and outline of the probing algorithm used for crack detection is given in Section \ref{sec:theoIntro}, where we also state the main result of this paper in Theorem \ref{th1} that lays the theoretical foundation to perform the probing algorithm between the cracks. In Section \ref{sec:proof} we give a proof of our main result. First we describe an expression of the indicator function and the behavior of the solution of \eqref{eq:0} around a tip of the crack.
Using those, we reduce the problem to studying the leading profile of an oscillatory integral with large parameter $\tau$ which is stated as Lemma \ref{lem1-3}.
The proof of Lemma \ref{lem1-3} is given in Subsection \ref{sec:proofLim}, which is an application of the method of {\it  steepest descent} (cf. \cite{Olver1997}).
In Section \ref{sec:numericalConsider} we discuss the implementation of the probing algorithm based on the Kelvin transformed enclosure method and propose a monitoring procedure during an idealized setting of RSW. We then present our computational results in Section \ref{sec:compExperiments} of a possible probing procedure during the welding process. Final conclusions are then presented in Section \ref{sec:conclusions}.

\section{A probing algorithm for crack detection}\label{sec:theoIntro}
Let us start by extending the definition of the crack geometry. We remind that the welding area is given by the line $[0,a]\times\{c\}$, we divide this area into the parts that are already joined and those are not, the cracks.
Then the section of already joined plates is given by
\[
W=[0,\,a]\times\{c\}\setminus\Sigma,
\]
where $\Sigma$ denotes the cracks and is a subset of $[0,\,a]\times\{c\}$, given by
\[
\Sigma=\bigcup_{j=0}^m[c_{2j}, c_{2j+1}]\times\{c\}
\]
with an integer $m\ge 1$ denoting the number of cracks, such that $0=c_0<c_1<\cdots<c_{2m}<c_{2m+1}=a$.
See Figure \ref{fig00} for an illustration of the general geometry under consideration.

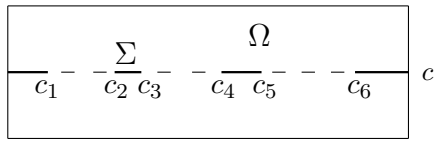
\begin{figure}[h!]
\centering
\begin{picture}(150,60)

\put(0,0){\Large{\line(1,0){150}}}
\put(0,0){\Large{\line(0,1){50}}}
\put(150,0){\Large{\line(0,1){50}}}
\put(0,50){\Large{\line(1,0){150}}}
{\linethickness{0.3mm}
\put(0,25){\Large{\line(1,0){15}}}
\put(40,25){\Large{\line(1,0){10}}}
\put(80,25){\Large{\line(1,0){15}}}
\put(130,25){\Large{\line(1,0){20}}}
}
\multiput(8,25)(12,0){3}{\line(1,0){5}}
\multiput(43,25)(13,0){3}{\line(1,0){5}}
\multiput(88,25)(11,0){4}{\line(1,0){5}}
\put(40,28){\large{$\Sigma$}}
\put(90,35){\large{$\Omega$}}
\put(155,22){\small{$c$}}
\put(10,17){$c_{1}$}
\put(36,17){$c_{2}\;c_{3}$}
\put(76,17){$c_{4}\;\;c_{5}$}
\put(127,17){$c_{6}$}
\end{picture}
\caption{\label{fig00} Illustration of the general setting under consideration. $\Omega$ denotes the metal slab. The crack set $\Sigma$ is divided into 4 separate cracks on the line $[0,a]\times\{c\}$ parametrized by $c_j$, $j=1,\dots,6$, that we want to detect.}
\end{figure} 

\subsection{Theoretical considerations}
In this paper, we make use of the following notation: $B_R(\mbox{\boldmath $x$})$ denotes the open disc
centered at $\mbox{\boldmath $x$}\in\mathbb{R}^2$ with radius $R$; $\mbox{\boldmath $e$}_1=(1,0)^T$, $\mbox{\boldmath $e$}_2=(0,1)^T$.
We recall the enclosure method combined with the Kelvin transform as established in \cite{Ikehata2016}. 
First we need a family of special solutions of the Laplace equation for a large parameter $\tau>0$.
Given $\mbox{\boldmath $\xi$}\in\R^2\setminus\overline{\Omega}$  define
\[
v_{\tau}(\mbox{\boldmath $x$};\mbox{\boldmath $\xi$})=\exp\left\{-\tau\frac{\mbox{\boldmath $x$}-\mbox{\boldmath $\xi$}}{\vert \mbox{\boldmath $x$}-\mbox{\boldmath $\xi$}\vert^2}\cdot(\mbox{\boldmath $e$}_2+i\mbox{\boldmath $e$}_1)\right\},\quad \mbox{\boldmath $x$}\in\R^2\setminus\{\mbox{\boldmath $\xi$}\}.
\]
This is a solution of the Laplace equation in $\R^2\setminus\{\mbox{\boldmath $\xi$}\}$ with the asymptotic behavior of $e^{-\tau/(2s)}v_{\tau}(\mbox{\boldmath $x$};\mbox{\boldmath $\xi$})$ for $s>0$ and $\tau\rightarrow\infty$:
\begin{eqnarray}
\lim_{\tau\rightarrow\infty}e^{-\tau/(2s)}v_{\tau}(\mbox{\boldmath $x$};\mbox{\boldmath $\xi$})
=
\left\{
\begin{array}{ll}
0 & \text{if $\mbox{\boldmath $x$}\in\mathbb{R}^2\setminus\overline{B_s (\mbox{\boldmath $\xi$}-s\mbox{\boldmath $e$}_2)}$,}\\
\\
\displaystyle
\infty & \text{if $\mbox{\boldmath $x$}\in B_s(\mbox{\boldmath $\xi$}-s\mbox{\boldmath $e$}_2)$.}
\end{array}
\right.
\label{eq:3}
\end{eqnarray}
Note that, on the circle $\partial B_s(\mbox{\boldmath $\xi$}-s\mbox{\boldmath $e$}_2)$ the function $e^{-\tau/(2s)}v_{\tau}(\mbox{\boldmath $x$};\mbox{\boldmath $\xi$})$ is highly oscillating as $\tau\rightarrow\infty$.

We then define an indicator function by
\begin{eqnarray}
I(\tau ;\mbox{\boldmath $\xi$}):=\int_{\partial\Omega}
\left(g(\mbox{\boldmath $x$})v_{\tau}(\mbox{\boldmath $x$};\mbox{\boldmath $\xi$})-u(\mbox{\boldmath $x$})\frac{\partial v_{\tau}(\mbox{\boldmath $x$};\mbox{\boldmath $\xi$})}{\partial\mbox{\boldmath $\nu$}_{\mbox{\boldmath $x$}}}\right)\; {\rm d}s_{\mbox{\boldmath $x$}}.
\label{eq:2}
\end{eqnarray}
Here we restrict the moving range of $\mbox{\boldmath $\xi$}$ to the segment $\displaystyle\Gamma_{\epsilon}=[0, a]\times\,\{b+\epsilon\}$ with a fixed positive number $\epsilon$ and let 
\begin{eqnarray}
s=(b+\epsilon-c)/2,
\label{eq:4}
\end{eqnarray}
which is in the middle point of $\mbox{\boldmath $\xi$}\in\Gamma_{\epsilon}$ from line $x_2=c$.
In \cite{Ikehata2016} it is shown that, if $\supp(g) \subset ]0,a[\times \{b\}$, see also condition A1.) in Theorem \ref{th1}, then the indicator function $I(\tau ;\mbox{\boldmath $\xi$})$ multiplied by $e^{-\tau/(2s)}$ as $\tau\rightarrow\infty$ has the following asymptotic behavior:
\begin{itemize}
\item If the projection of $\mbox{\boldmath $\xi$}$ onto the line $x_2=c$ coincides with a tip of crack $\Sigma$ in $\Omega$, then there exists an integer $N\ge 1$ and a positive number $A$ such that
\begin{eqnarray}
\lim_{\tau\rightarrow\infty}\tau^{(2N-1)/2}e^{-\tau/(2s)}|I(\tau;\mbox{\boldmath $\xi$})|=A.
\label{eq:5}
\end{eqnarray}
\item If the projection of $\mbox{\boldmath $\xi$}$ onto the line $x_2=c$ does not coincide with any tip of crack $\Sigma$ in $\Omega$, then the function $e^{-\tau/(2s)}I(\tau ;\mbox{\boldmath $\xi$})$ is exponentially decaying as $\tau\rightarrow\infty$.
See Figure \ref{fig00-1} for an illustration of this case.
\end{itemize}

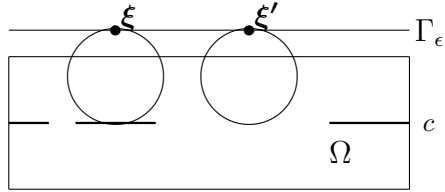
\begin{figure}[h!]
\centering
\begin{picture}(150,80)
\put(0,0){\Large{\line(1,0){150}}}
\put(0,60){\Large{\line(1,0){150}}}
\put(0,0){\Large{\line(0,1){50}}}
\put(150,0){\Large{\line(0,1){50}}}
\put(0,50){\Large{\line(1,0){150}}}
{\linethickness{0.3mm}
\put(0,25){\Large{\line(1,0){15}}}
\put(25,25){\Large{\line(1,0){30}}}
\put(120,25){\Large{\line(1,0){30}}}
}
\put(40,42.5){\circle{34.0}}
\put(37.5,57.5){\small{{$\bullet$}}}
\put(42,62){$\mbox{\boldmath $\xi$}$}

\put(90,42.5){\circle{34.0}}
\put(87.5,57.5){\small{{$\bullet$}}}
\put(92,62){$\mbox{\boldmath $\xi'$}$}

\put(120,10){\large{$\Omega$}}
\put(152,56){\large{$\Gamma_\epsilon$}}
\put(155,22){\small{$c$}}
\end{picture}
\caption{\label{fig00-1} Illustration of the case when the projection onto the line $x_2=c$ does not coincide with a tip of crack. Here $(\xi_1,c)\in\Sigma \backslash \{c_0,c_1,\dots,c_{2m+1}\}$ and $(\xi_1',c)\in W$.}
\end{figure}

Note that \eqref{eq:5} implies that the function $e^{-\tau/(2s)}I(\tau;\mbox{\boldmath $\xi$})$ is {\it truly algebraic} decaying as $\tau\rightarrow\infty$.
Thus by the {\it difference} of the decaying property of the function $e^{-\tau/(2s)}I(\tau ;\mbox{\boldmath $\xi$})$ as $\tau\rightarrow\infty$ one can identify all the tips of $\Sigma$ in $\Omega$.
This gives a qualitative identification procedure for the crack $\Sigma$.

The purpose of the present paper is to extend the method used in \cite{Ikehata2016} to a setting that is suitable for probing the whole domain and propose a numerical algorithm based on these results. For this purpose we take a modified approach as described above which is based on the decaying property of $e^{-\tau/(2s)}I(\tau ;\mbox{\boldmath $\xi$})$ with $s$ given by \eqref{eq:4}.

For this purpose we define the following function of $\mbox{\boldmath $\xi$}\in\Gamma_{\epsilon}$ given by
\[
s_{\Sigma}(\mbox{\boldmath $\xi$}):=\sup\,\{s>0\,\vert\,B_s(\mbox{\boldmath $\xi$}-s\mbox{\boldmath $e$}_2)\subset\R^2\setminus\Sigma\}.
\]
The value $s_{\Sigma}(\mbox{\boldmath $\xi$})$ at $\mbox{\boldmath $\xi$}\in\Gamma_{\epsilon}$ coincides with the radius
of the largest disc centered at $\mbox{\boldmath $\xi$}-s\mbox{\boldmath $e$}_2$ with radius $s$ whose exterior encloses $\Sigma$.
Note that $s$ given by \eqref{eq:4} coincides with $s_{\Sigma}(\mbox{\boldmath $\xi$})$ if the projection of $\mbox{\boldmath $\xi$}$ onto the line $x_2=c$ belongs to $\Sigma$, that is the case for $\xibf$ in Figure \ref{fig00-1}.  
Furthermore, equation \eqref{eq:5} implies, that if the projection of $\mbox{\boldmath $\xi$}$ onto the line $x_2=c$ coincides with a tip of crack $\Sigma$ in $\Omega$
we simply have
\begin{eqnarray}
\lim_{\tau\rightarrow\infty}\frac{\log\vert I(\tau ;\mbox{\boldmath $\xi$})\vert}{\tau}=\frac{1}{2s}.
\label{eq:6}
\end{eqnarray}
The theoretical purpose of this paper is to extend this formula to the case when the projection of $\xibf$ onto the line $x_2=c$ belongs to $W$, as shown in Figure \ref{fig00-2}. Let us now state our main result and include $s_{\Sigma}(\mbox{\boldmath $\xi$})$ in \eqref{eq:6} instead of $s$.

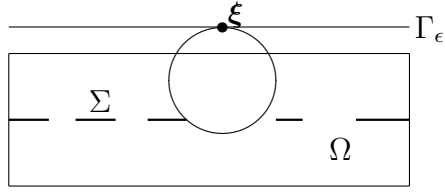
\begin{figure}[h!]
\centering
\begin{picture}(150,80)
\put(0,0){\Large{\line(1,0){150}}}
\put(0,60){\Large{\line(1,0){150}}}
\put(0,0){\Large{\line(0,1){50}}}
\put(150,0){\Large{\line(0,1){50}}}
\put(0,50){\Large{\line(1,0){150}}}
{\linethickness{0.3mm}
\put(0,25){\Large{\line(1,0){15}}}
\put(25,25){\Large{\line(1,0){15}}}
\put(52,25){\Large{\line(1,0){14.5}}}
\put(100,25){\Large{\line(1,0){10}}}
\put(130,25){\Large{\line(1,0){20}}}
}
\put(30,28){\large{$\Sigma$}}
\put(80,40){\circle{40}}
\put(77.5,57.5){\small{{$\bullet$}}}
\put(82,62){$\mbox{\boldmath $\xi$}$}

\put(120,10){\large{$\Omega$}}
\put(152,56){\large{$\Gamma_\epsilon$}}
\end{picture}
\caption{\label{fig00-2}  Illustration of the case when the projection onto the line $x_2=c$ belongs to $W$ and the intersection of the closure of the disc $B_s(\mbox{\boldmath $\xi$}-s\mbox{\boldmath $e$}_2)\vert_{s=s_{\Sigma}(\mbox{\boldmath $\xi$})}$ with $\Sigma$
consists of a single tip of $\Sigma$. }
\end{figure}

\begin{theorem}\label{th1}
Let the mean-free Neumann data $g\in L^2(\partial \Omega)$ be such that either is fulfilled:
\begin{itemize}
\item[A1.)] $\supp(g) \subset ]0,a[ \times \{b\}$.
\item[A2.)] $\supp(g) \subset  ( \partial\Omega \cap \{|x_2-c|>\delta \} ) \backslash  (\ B_\delta(O) \cup B_\delta((0,b)) \cup B_\delta((a,b)) \cup B_\delta((a,0))\ )$
for some $\delta > 0$ and 
$$\int_{\partial\Omega \cap \{x_2>c\} } g \; {\rm d}s_{\mbox{\boldmath $x$}} \neq 0 \text{ or } \int_{\partial\Omega \cap \{x_2<c\} } g \; {\rm d}s_{\mbox{\boldmath $x$}} \neq 0,$$ 
\end{itemize}
Let further $\mbox{\boldmath $\xi$}$ satisfy that the intersection of $\overline{B_{s_{\Sigma}(\mbox{\boldmath $\xi$})}(\mbox{\boldmath $\xi$}-s_{\Sigma}(\mbox{\boldmath $\xi$})\mbox{\boldmath $e$}_2)}$ with the crack $\Sigma$ in $\Omega$ consists of a single point, namely there exists a $j\in\{1,\cdots,2m\}$ such that
\begin{eqnarray}
\overline{B_{s_{\Sigma}(\mbox{\boldmath $\xi$})}(\mbox{\boldmath $\xi$}-s_{\Sigma}(\mbox{\boldmath $\xi$})\mbox{\boldmath $e$}_2)}\cap\Sigma=\{(c_j,c)\}.
\label{eq:7}
\end{eqnarray}
Then, the function $e^{-\tau/(2s_{\Sigma}(\mbox{\boldmath $\xi$}))}I(\tau ;\mbox{\boldmath $\xi$})$ is truly algebraically decaying as $\tau\rightarrow\infty$ and it holds that
\begin{equation}\label{eq:8}
\lim_{\tau\rightarrow\infty}\frac{\log\vert I(\tau ;\mbox{\boldmath $\xi$})\vert}{\tau}=\frac{1}{2s_{\Sigma}(\mbox{\boldmath $\xi$})}.
\end{equation}
\end{theorem}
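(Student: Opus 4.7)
The plan is to reduce the boundary integral $I(\tau;\mbox{\boldmath $\xi$})$ to a one-dimensional integral along the crack by Green's identity, and then analyze it by an endpoint saddle-point method whose leading term is controlled by the crack-tip singularity of $u$. Applying Green's second identity separately in $\Omega^+$ and $\Omega^-$ to the pair $(u,v_\tau(\cdot;\mbox{\boldmath $\xi$}))$---noting that $v_\tau$ is harmonic throughout $\Omega$ because $\mbox{\boldmath $\xi$}\in\Gamma_\epsilon$ lies outside $\overline{\Omega}$, while $u$ is harmonic on $\Omega\setminus\Sigma$ with $\partial u/\partial x_2=0$ on $\Sigma$---and summing, the contributions along the joined segment $W$ cancel by the continuity of $u$ and $\partial_{x_2}u$, the $v_\tau\,\partial_{x_2}u$ terms along $\Sigma$ vanish by the crack boundary condition, and the $\partial\Omega$ terms assemble into $I(\tau;\mbox{\boldmath $\xi$})$. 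This leaves
\[
I(\tau;\mbox{\boldmath $\xi$})=-\int_\Sigma [u](x_1,c)\,\partial_{x_2} v_\tau(x_1,c;\mbox{\boldmath $\xi$})\,dx_1,
\]
where $[u]=u^+-u^-$ is the jump across the crack.

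Next I would exploit the explicit form of $v_\tau$: the real part of its exponent on the line $\{x_2=c\}$ equals $\tau(b+\epsilon-c)/\bigl((x_1-\xi_1)^2+(b+\epsilon-c)^2\bigr)$, whose superlevel sets are precisely the discs tangent to $\{x_2=b+\epsilon\}$ at $\mbox{\boldmath $\xi$}$. By the definition of $s_\Sigma(\mbox{\boldmath $\xi$})$ the value $\tau/(2s_\Sigma(\mbox{\boldmath $\xi$}))$ is the maximum of this real part on $\Sigma$, and hypothesis \eqref{eq:7} guarantees it is attained only at $(c_j,c)$; contributions from $\Sigma$ outside any fixed neighborhood of $c_j$ are therefore strictly smaller in exponential order than $e^{\tau/(2s_\Sigma(\mbox{\boldmath $\xi$}))}$. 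Near this unique contact tip, $u$ obeys the standard Neumann crack-tip expansion $[u](x_1,c)=K_j|x_1-c_j|^{1/2}+O(|x_1-c_j|)$ on the crack side of $c_j$, with a stress-intensity-type coefficient $K_j$. The assumptions A1 or A2 on $g$ guarantee, via unique continuation under A1 or the non-vanishing net flux through $\partial\Omega\cap\{x_2\gtrless c\}$ under A2---combined with a reductio ruling out that $u$ extends harmonically across the tip---that $K_j\neq 0$.

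The remaining task, which is the content of Lemma \ref{lem1-3}, is the asymptotic analysis of the resulting integral as $\tau\to\infty$. After the change of variable $t=|x_1-c_j|$, and noting that $\partial_{x_2}v_\tau$ contributes a factor of $\tau$ times a smooth, non-vanishing function of $t$ coming from differentiating the exponent of $v_\tau$, the integral takes the form $\tau\int_0^{\delta}\bigl(A_0 t^{1/2}+O(t)\bigr)\,e^{\tau S(t)}\,dt$ with $S$ smooth and complex-valued, $\mathrm{Re}\,S(0)=1/(2s_\Sigma(\mbox{\boldmath $\xi$}))$, $\mathrm{Re}\,S$ strictly decreasing near $t=0$, and a leading amplitude $A_0$ proportional to $K_j$. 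This is where the main obstacle lies: the phase is genuinely complex-valued so pure Laplace's method does not apply, the amplitude is non-smooth at the endpoint so the resulting half-integer power of $\tau$ is of Fresnel type, and one must verify that the leading coefficient does not vanish. I would handle it by the method of steepest descent---deforming the real contour into the direction of steepest descent emanating from the endpoint and reducing to a gamma-function integral---which yields $I(\tau;\mbox{\boldmath $\xi$})\sim C\,\tau^{-1/2}\,e^{\tau/(2s_\Sigma(\mbox{\boldmath $\xi$}))}$ with $C\neq 0$. This establishes the truly algebraic decay, and taking logarithms and dividing by $\tau$ gives \eqref{eq:8}.
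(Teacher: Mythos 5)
Your overall architecture matches the paper's: the Green's-identity reduction to an integral of the jump $[u]$ against $\partial_{x_2}v_\tau$ over $\Sigma$ is exactly Proposition \ref{prop2}, the geometric observation that the superlevel sets of $\mathrm{Re}$ of the exponent are discs tangent at $\mbox{\boldmath $\xi$}$ is the Kelvin-transform mechanism that makes the far part $J_1$ exponentially negligible, and the paper's Lemma \ref{lem1-3} is precisely the steepest-descent endpoint computation you sketch. So the route is right. But there is a genuine gap in the step where you assert $K_j\neq 0$.

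What the hypotheses A1/A2 plus the reflection/unique-continuation argument actually give is the negation of ``\emph{all} odd crack-tip coefficients vanish'': if every $A_{2n-1}^{(j)}=0$, then $u$ is an even function of $\theta$ near the tip, $\partial_{x_2}u$ extends by zero across $W$, the reflected function is harmonic across the welding line, and the net flux through $\partial\Omega\cap\{x_2>c\}$ and $\partial\Omega\cap\{x_2<c\}$ must vanish, contradicting A2 (and, for A1, forcing $u^-$ constant, hence $g=0$). This shows $A_{2n-1}^{(j)}\neq 0$ for \emph{some} $n\ge 1$; it does \emph{not} show that the leading $r^{1/2}$ coefficient $A_1^{(j)}=K_j$ is nonzero. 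Your two-term expansion $[u]=K_j\,t^{1/2}+O(t)$ with $K_j\neq 0$, and the resulting claim $I(\tau;\mbox{\boldmath $\xi$})\sim C\,\tau^{-1/2}e^{\tau/(2s_\Sigma(\mbox{\boldmath $\xi$}))}$, therefore rests on an unproved (and in general false) assumption. The paper avoids this by invoking the \emph{full convergent} expansion of $u$ around the tip (Proposition \ref{prop2-1}, from the Grisvard-type eigenfunction argument) with error control at every order, setting $N:=\min\{n\ge 1: A_{2n-1}^{(j)}\neq 0\}$, applying the steepest-descent Lemma \ref{lem1-3} term-by-term for $n=N,\dots,2N$, and concluding the rate $\tau^{-(2N-1)/2}$ rather than $\tau^{-1/2}$. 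Fortunately the log-asymptotic \eqref{eq:8} only needs \emph{truly algebraic} decay of \emph{some} order, so the theorem survives, but your argument as written would not. To repair it you need (i) the convergent multi-term expansion with quantified remainders, so that when $K_j=0$ you can pass to the first nonzero odd coefficient, and (ii) a check that after steepest descent the distinct powers $\tau^{-(2n-1)/2}$ do not interfere at leading order, which the paper handles by organizing \eqref{eq:2-5} so that the $n=N$ term dominates the rest.
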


Since the area between the gaps is given by
\[
W=\bigcup_{j=1}^m(\,]c_{2j-1},c_{2j}[\,\times\{c\}),
\]
the condition \eqref{eq:7} is satisfied with $\mbox{\boldmath $\xi$}=(\xi_1,\xi_2)\in\Gamma_{\epsilon}$ such that $\xi_1\in [c_{2j-1}, c_{2j}]\setminus\{(c_{2j}+c_{2j+1})/2\}$ with $j=1,\cdots, m$.
Thus, equation \eqref{eq:8} in Theorem \ref{th1} is satisfied and by definition of $s_{\Sigma}(\mbox{\boldmath $\xi$})$ we have for $\mbox{\boldmath $\xi$}\in\Gamma_{\epsilon}$ with $\xi_1\in [c_{2j-1}, c_{2j}]$ the explicit form:
\begin{equation}\label{eq:probingFunc}
s_{\Sigma}(\mbox{\boldmath $\xi$})=\left\{
\begin{array}{ll}
\displaystyle
\frac{(\xi_1-c_{2j-1})^2+(\xi_2-c)^2}{2(\xi_2-c)} & \text{if $\displaystyle c_{2j-1}\le\xi_1\le\frac{c_{2j-1}+c_{2j}}{2}$,}\\
\\
\displaystyle
\frac{(\xi_1-c_{2j})^2+(\xi_2-c)^2}{2(\xi_2-c)} & \text{if $\displaystyle \frac{c_{2j-1}+c_{2j}}{2}<\xi_1\le c_{2j}$.}
\end{array}
\right.
\end{equation}


\subsection{The probing algorithm}\label{sec:probingAlgoTheo}
The second purpose of this paper is to establish a computational probing algorithm that will indicate the tips of the cracks $\Sigma$. For this purpose the above equation \eqref{eq:probingFunc} will be the starting point for the probing procedure. Let us denote the right-hand side of \eqref{eq:8} by $\Phi(\xi_1)=\frac{1}{2}s_{\Sigma}^{-1}(\xi_1,\xi_2)$ with $\xi_2=b+\epsilon$ fixed. Then theoretically, we should see the following:
\begin{itemize}
\item There is a local minimum for $\Phi(\xi_1)$ at the point $\xi_1=\frac{c_{2j-1}+c_{2j}}{2}$; this should be the only local minimum in the interval $[c_{2j-1}, c_{2j}]$.
\item We should have 
$$
 \left. \frac{d\Phi(t)}{dt}\right|_{t=c_{2j-1}} = 0 =
   \left. \frac{d\Phi(t)}{dt}\right|_{t=c_{2j}}.
$$
\item By equation \eqref{eq:probingFunc} we should actually see that $\Phi(\xi_1)$ on $[ c_{2j-1}, c_{2j} ]$ attains a maxima at the points $\xi_1=c_{2j-1}$ and $\xi_1=c_{2j}$.
\end{itemize}

In short, our numerical approach is motivated by formula \eqref{eq:8} and the explicit form of $s_{\Sigma}(\mbox{\boldmath $\xi$})$ given in \eqref{eq:probingFunc}. 
Thus, we propose the following procedure for probing the domain from the top.
\renewcommand{\theenumi}{\roman{enumi}}
\renewcommand{\labelenumi}{{\rm (\theenumi)}}
\begin{enumerate}
\item Fix a position of $\xibf\in\Gamma_{\epsilon}$.
\item Compute $\log\vert I(\tau ;\mbox{\boldmath $\xi$})\vert$ for a selection of $\tau$.
\item Estimate the slope by linear regression and record the value.
\item Change position $\mbox{\boldmath $\xi$}$ (horizontally) and repeat (ii) until domain covered.
\end{enumerate}
Observing a change in the estimated slope of $\log\vert I(\tau ;\mbox{\boldmath $\xi$})\vert$ which may approximate $\Phi(\xi_1)$ for $(\xi_1,\xi_2)\in \Gamma_\epsilon$, we expect to find the positions of the tips of $\Sigma$.

However, note that Theorem \ref{th1} does not ensure the existence of the limit of \eqref{eq:8} if the projection of $\mbox{\boldmath $\xi$}\in\Gamma_{\epsilon}$ onto the line $x_2=c$ belongs to the set $\Sigma\setminus\{c_0,c_1,\cdots,c_{2m+1}\}$ or $\cup_{j=1}^{m}\{(c_{2j-1}+c_{2j})/2,c)\}$.
The latter set is a discrete set and hence may not pose serious problems in the computations. 
In the former case, from the previous results mentioned above we have: there exist positive numbers $C_1$ and $C_2$ such that
\begin{eqnarray}
e^{-\tau/(2s)}\vert I(\tau ;\mbox{\boldmath $\xi$})\vert\le C_1e^{-\tau C_2}
\label{eq:8-1}
\end{eqnarray}
for all $\tau \gg 1$, where $s$ is given by \eqref{eq:4} and coincides with $s_{\Sigma}(\mbox{\boldmath $\xi$})$ as mentioned above.
Here assume that the zero set $Z_{\Sigma}(\mbox{\boldmath $\xi$}):=\{\tau>0\,\vert I(\tau;\mbox{\boldmath $\xi$})=0\}$ is {\it bounded}.
Then, \eqref{eq:8-1} implies
$$\displaystyle
\limsup_{\tau\rightarrow\infty}\frac{\log\vert I(\tau ;\mbox{\boldmath $\xi$})\vert}{\tau}
\le \frac{1}{2s_{\Sigma}(\mbox{\boldmath $\xi$})}-C_2<\frac{1}{2s_{\Sigma}(\mbox{\boldmath $\xi$})}.
$$
This also suggests that the profile of $\log\vert I(\tau ;\mbox{\boldmath $\xi$})\vert/\tau$ for $\mbox{\boldmath $\xi$}$ with $(\xi_1,c)\in\Sigma\setminus\{c_0,c_1,\cdots,c_{2m+1}\}$ and with $(\xi_1,c)\in \overline{W}\setminus\cup_{j=1}^{m}\{(c_{2j-1}+c_{2j})/2,c)\}$ will pick up information about all the tips of $\Sigma$.  
The boundedness of $Z_{\Sigma}(\mbox{\boldmath $\xi$})$ for $\mbox{\boldmath $\xi$}$ with $(\xi_1,c)\in\Sigma\setminus\{c_0,c_1,\cdots,c_{2m+1}\}$ remains open.

We conclude this section with a remark about the partial boundary case, especially relevant for the application where we can not apply a current on the whole boundary $\bndry$.

\begin{remark}\label{rem1}
Assume that we know a positive number $M$ such that
\[
\sup_{\xi\in\Gamma_{\epsilon}}s_{\Sigma}(\mbox{\boldmath $\xi$})<M.
\]
Let $\delta$ be an arbitrary positive number satisfying
\[
\delta>c-(b+\epsilon-2M).
\]
Define
\begin{equation*}\label{eqn:partialIndicator}
I_{\delta}(\tau ;\mbox{\boldmath $\xi$}):=
\int_{(\partial\Omega)_{\delta}}\left(g(\mbox{\boldmath $x$})v_{\tau}(\mbox{\boldmath $x$};\mbox{\boldmath $\xi$})-\frac{\partial v_{\tau}(\mbox{\boldmath $x$};\mbox{\boldmath $\xi$})}{\partial\mbox{\boldmath $\nu$}_{\mbox{\boldmath $x$}}}u(\mbox{\boldmath $x$})\right)\; {\rm d}s_{\mbox{\boldmath $x$}},
\end{equation*}
where $(\partial\Omega)_{\delta}=\partial\Omega\cap\{\mbox{\boldmath $x$}\in\R^2\,\vert\,x_2>c-\delta\}$.
Then, it is easy to see that, as $\tau\rightarrow\infty$
\[
e^{-\tau/(2s_{\Sigma}(\mbox{\boldmath $\xi$}))}I(\tau ;\mbox{\boldmath $\xi$})
=e^{-\tau/(2s_{\Sigma}(\mbox{\boldmath $\xi$}))}I_{\delta}(\tau ;\mbox{\boldmath $\xi$})+O(\tau^{-\infty}).
\]
Thus, Theorem \ref{th1} also holds if $I(\tau ;\mbox{\boldmath $\xi$})$ is replaced with $I_{\delta}(\tau ;\mbox{\boldmath $\xi$})$.
However, if $M$ is too large, it means that there is a large connected component of $W$, then $\delta$ should be large.
In this case $(\partial\Omega)_{\delta}=\partial\Omega$.
\end{remark}

\section{Proof of the main result}\label{sec:proof}

The proof of Theorem \ref{th1} proceeds along the same line as in \cite{Ikehata2016}.  
It suffices to prove that the function
$$\begin{array}{ll}
\displaystyle
J(\tau ;\mbox{\boldmath $\xi$}):=e^{-\tau/(2s_{\Sigma}(\mbox{\boldmath $\xi$}))}I(\tau;\mbox{\boldmath $\xi$}), & \tau>0,
\end{array}
$$
is decaying {\it truly algebraically} as $\tau\rightarrow\infty$ for each fixed $\mbox{\boldmath $\xi$}\in\Gamma_\epsilon$ satisfying \eqref{eq:7}.
More precisely we show: there exist a positive number $B$ and an integer $N\ge 1$ such that
\begin{eqnarray}
\lim_{\tau\rightarrow\infty}\tau^{\frac{2N-1}{2}}\vert J(\tau ;\mbox{\boldmath $\xi$})\vert =B.
\label{eq:9}
\end{eqnarray}
For this purpose we describe two important facts.

\subsection{Preliminary facts}

We denote by $u(\,\cdot\,,c\pm 0)\in H^{1/2}(0,\,a)$ the trace of $u^{\pm}=u\vert_{\Omega^{\pm}}\in H^1(\Omega^{\pm})$ onto $]0,\,a[\,\times\{c\}$, respectively.
First of all the proof is based on the following representation formula of the indicator function which can be proved by using \eqref{eq:1} and integration by parts.

\begin{proposition}[Proposition 1 in \cite{Ikehata2016}]\label{prop2}
The formula
\begin{eqnarray*}
I(\tau ; \mbox{\boldmath $\xi$})=-\int_{\Sigma} (u^{+}(\mbox{\boldmath $x$})-u^{-}(\mbox{\boldmath $x$})) \frac{\partial v_{\tau}(\mbox{\boldmath $x$}; \mbox{\boldmath $\xi$})}{\partial x_{2}} \; {\rm d}s_{\mbox{\boldmath $x$}}
\end{eqnarray*}
is valid.
\end{proposition}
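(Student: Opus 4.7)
The plan is to derive the representation formula by combining the weak formulation \eqref{eq:1} of the direct problem with Green's second identity applied separately on the two half-domains $\Omega^{+}$ and $\Omega^{-}$, and then use the jump structure across the line $x_{2}=c$ to cancel everything except the integral over $\Sigma$. A key observation that makes this reasonable is that $\mbox{\boldmath $\xi$}\in\R^{2}\setminus\overline{\Omega}$ implies $v_{\tau}(\,\cdot\,;\mbox{\boldmath $\xi$})\in C^{\infty}(\overline{\Omega})$ and $\Delta v_{\tau}=0$ throughout $\Omega$, so it is an admissible test function in \eqref{eq:1} on each of $\Omega^{\pm}$ and has well-defined boundary traces.

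First, I would substitute $\varphi=v_{\tau}(\,\cdot\,;\mbox{\boldmath $\xi$})$ into \eqref{eq:1} to obtain
\[
\int_{\Omega\setminus\Sigma}\nabla u\cdot\nabla v_{\tau}\;{\rm d}\mbox{\boldmath $x$}=\int_{\partial\Omega}gv_{\tau}\;{\rm d}s_{\mbox{\boldmath $x$}}.
\]
Next, because $\Omega\setminus\Sigma$ decomposes into the disjoint open pieces $\Omega^{+}$ and $\Omega^{-}$ glued along the Lipschitz segments comprising $W$, and $u^{\pm}\in H^{1}(\Omega^{\pm})$, I would apply Green's first identity on each half separately. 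Since $\Delta v_{\tau}=0$ in $\Omega^{\pm}$, this gives
\[
\int_{\Omega^{\pm}}\nabla u^{\pm}\cdot\nabla v_{\tau}\;{\rm d}\mbox{\boldmath $x$}=\int_{\partial\Omega^{\pm}}u^{\pm}\frac{\partial v_{\tau}}{\partial\mbox{\boldmath $\nu$}^{\pm}}\;{\rm d}s_{\mbox{\boldmath $x$}},
\]
where $\mbox{\boldmath $\nu$}^{\pm}$ is the outward unit normal to $\Omega^{\pm}$. On the common interface $[0,a]\times\{c\}$ one has $\mbox{\boldmath $\nu$}^{+}=-\mbox{\boldmath $e$}_{2}$ and $\mbox{\boldmath $\nu$}^{-}=+\mbox{\boldmath $e$}_{2}$; on the remaining portion of $\partial\Omega^{\pm}$ the normals agree with $\mbox{\boldmath $\nu$}$. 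Summing the two identities and comparing with the weak formulation yields
\[
\int_{\partial\Omega}gv_{\tau}\;{\rm d}s_{\mbox{\boldmath $x$}}=\int_{\partial\Omega}u\,\frac{\partial v_{\tau}}{\partial\mbox{\boldmath $\nu$}}\;{\rm d}s_{\mbox{\boldmath $x$}}-\int_{0}^{a}\bigl(u(x_{1},c+0)-u(x_{1},c-0)\bigr)\frac{\partial v_{\tau}}{\partial x_{2}}(x_{1},c)\;{\rm d}x_{1}.
\]
The final step is to restrict the line integral on the right-hand side from $[0,a]\times\{c\}$ to $\Sigma$, which is immediate once one verifies that $u^{+}=u^{-}$ as $H^{1/2}$-traces on $W$; rearranging then produces the claimed formula.

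The main technical point to justify is precisely the continuity of the traces across $W$. Because $\Sigma$ and $W$ are disjoint subsets of the same segment, near any interior point of $W$ the solution $u$ actually lies in $H^{1}$ of a two-sided neighborhood (the crack condition plays no role there), so by the standard trace theorem the one-sided traces $u(\,\cdot\,,c\pm 0)$ coincide on $W$. Everything else is bookkeeping with signs of normals. I would only need to be a little careful about the endpoints of each component of $\Sigma$ (the crack tips): since $\Sigma$ is a finite union of closed intervals, the line integral over $[0,a]\times\{c\}$ splits cleanly as the sum of integrals over $W$ and $\Sigma$, and the trace identity on $W$ disposes of the $W$-contribution. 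This verification of trace continuity across the joined segments $W$ is the main — though quite standard — obstacle, and once it is in place the proposition follows directly.
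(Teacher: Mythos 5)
Your proof is correct and follows precisely the route the paper indicates (it states only that the formula ``can be proved by using \eqref{eq:1} and integration by parts'' and cites \cite{Ikehata2016} for details): take $\varphi=v_\tau$ in the weak formulation, apply Green's first identity on $\Omega^{\pm}$ separately using $\Delta v_\tau=0$ and the smoothness of $v_\tau$ on $\overline\Omega$ (since $\mbox{\boldmath $\xi$}\notin\overline\Omega$), track the opposite interface normals $\mbox{\boldmath $\nu$}^{\pm}=\mp\mbox{\boldmath $e$}_2$, and discard the contribution over $W$ by continuity of the trace there. The sign bookkeeping and the observation that the trace jump vanishes on $W$ (where $u$ is locally $H^1$ across the line) are exactly the points that need care, and you handle them correctly.
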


Note that as described in Section \ref{sec:theoIntro} we have already proven Theorem \ref{th1} in the special case when the projection of $\mbox{\boldmath $\xi$}\in\Gamma_{\epsilon}$ onto the line $x_2=c$ coincides with a tip of $\Sigma$ in $\Omega$.
Thus, the case to be considered is: the projection of $\mbox{\boldmath $\xi$}$ onto the line $x_2=c$ belongs to some $\left(]c_{j}, c_{j+1}[\setminus\{(c_j+c_{j+1})/2\}\right)\times\{c\}=]c_j, (c_j+c_{j+1})/2[\times\{c\}\,\cup\, ](c_j+c_{j+1})/2,c_{j+1}[\times\{c\}$ with an {\it odd} number $j$.
Here we consider only the case when the projection of $\mbox{\boldmath $\xi$}$ onto the line $x_2=c$ belongs to $]c_j, (c_j+c_{j+1})/2[\times\{c\}$.

The next important ingredient of the proof is a {\it convergent expansion formula} of $u$ around $(c_j,c)$ as stated below.
Choose a positive number $\eta_0$ in such a way that $\eta_0<\min_{j=1,\cdots, 2m+1}(c_j-c_{j-1})$ and $\eta_0<\min(b-c, c)$.
We choose a polar coordinates system centered at $(c_j,c)$ as done in \cite{Ikehata2016} and illustrated in Figure \ref{fig00-3}.

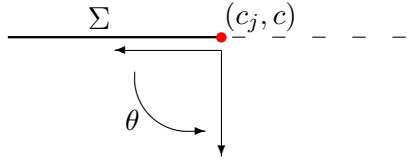
\begin{figure}[h!]
\centering
\begin{picture}(150,60)
{\linethickness{0.3mm}
\put(0,45){\Large{\line(1,0){80}}}
}
\put(30,48){\large{$\Sigma$}}
\multiput(9,45)(15,0){10}{\line(1,0){5}}

\put(80,40){\vector(-1,0){40}}
\put(80,40){\vector(0,-1){40}}
\put(81,50){$(c_j,c)$}
\put(77.5,42.5){\small{\tredbf{$\bullet$}}}
\put(70,32){\oval(45,45)[bl]}
\put(73,9.5){\small{\vector(1,0){1}}}
\put(44,10){$\theta$}
\end{picture}
\caption{\label{fig00-3}Illustration of the polar coordinates system used.}
\end{figure}

Set $\mbox{\boldmath $x$}:=(c_j-r\cos{\theta}, c-r\sin{\theta})$ for $0<r<\eta_0$ and $0<\theta<2\pi$ and define
$$\begin{array}{lll}
\displaystyle
u(r,\theta)=u(c_j-r\cos{\theta},c-r\sin{\theta}), & \displaystyle
0<r<\eta_0, & \displaystyle 0<\theta<2\pi.
\end{array}
$$
Hence, we have
$$\begin{array}{ll}
\displaystyle
u^+(\mbox{\boldmath $x$})=u(r,\theta), & \displaystyle \pi <\theta <2\pi,
\end{array}
$$
and
$$\begin{array}{ll}
\displaystyle
u^-(\mbox{\boldmath $x$})=u(r,\theta), & \displaystyle 0<\theta<\pi .
\end{array}
$$

\begin{proposition}[Proposition 2 in \cite{Ikehata2016}]\label{prop2-1}
Fix $ 0<\eta <\frac{\eta_{0}}{2} $.
There exist a real number $ M $ and a sequence $ \{A^{(j)}_{k}\} $ of real numbers such that
\[
u(r,\theta)-M =\sum_{k=1}^{\infty}r^{\frac{k}{2}}A^{(j)}_{k}\cos{\frac{k\theta}{2}},\quad 0<r<2\eta ,\quad 0<\theta<2\pi .
\]
The series is absolutely convergent in $ H^{1}(B_{\eta}((c_j ,c))\cap \Omega^{+})$ and $ H^{1}(B_{\eta}((c_j ,c))\cap \Omega^{-})$, and uniformly in $ B_{2\eta}((c_j ,c)) $.
Moreover, for each $ n=1,2,\cdots $ the following estimate is valid uniformly for $ 0<r<\eta $:
\begin{align*}
&\left\vert u(c_j -r, c-0)-M -\sum_{k=1}^{n}r^{\frac{k}{2}}A^{(j)}_{k}\right\vert  \ \\
&\ \hspace{2 cm} +  \left\vert u(c_j -r, c+0)-M -\sum_{k=1}^{n}r^{\frac{k}{2}}A^{(j)}_{k}(-1)^{k}\right\vert \leq K_{n} r^{\frac{n+1}{2}},
\end{align*}
where $ K_{n} $ is a positive constant depending on $ n $.
\end{proposition}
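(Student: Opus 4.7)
The plan is to obtain the expansion by separation of variables for the Laplacian in polar coordinates around the crack tip $(c_j,c)$, using the Neumann condition on the crack faces. First I would fix a radius $\eta_1\in(2\eta,\eta_0)$ so that the disc $B_{\eta_1}((c_j,c))$ is contained in $\Omega$ and meets $\Sigma$ only along a single line segment ending at $(c_j,c)$. In the polar chart $\mbox{\boldmath $x$}=(c_j-r\cos\theta,c-r\sin\theta)$ the crack face locally corresponds to $\theta=0^+$ and $\theta=2\pi^-$, and since $\partial_{x_2}=-\sin\theta\,\partial_r-(\cos\theta/r)\partial_\theta$, the condition $\partial u/\partial x_2=0$ translates to $\partial_\theta u(r,0^+)=\partial_\theta u(r,2\pi^-)=0$.

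Since $u$ is harmonic in this slit disc, I would expand it in the eigenbasis of the angular Neumann problem on $(0,2\pi)$, whose eigenvalues are $(k/2)^2$ and eigenfunctions $\cos(k\theta/2)$ for $k=0,1,2,\ldots$. Testing $\Delta u=0$ against $\cos(k\theta/2)$ and integrating by parts using the angular Neumann condition shows that the Fourier coefficients
\begin{equation*}
\alpha_k(r):=\int_0^{2\pi}u(r,\theta)\cos\!\Bigl(\frac{k\theta}{2}\Bigr)\,d\theta
\end{equation*}
solve the Euler ODE $r^2\alpha_k''+r\alpha_k'-(k/2)^2\alpha_k=0$, so $\alpha_k(r)=a_k r^{k/2}+b_k r^{-k/2}$ for $k\ge1$ and $\alpha_0(r)=a_0+b_0\log r$. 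The $H^1$-regularity of $u$ in a neighbourhood of the tip rules out the singular solutions $r^{-k/2}$ and $\log r$ (which are not in $H^1$ of a neighbourhood of the origin in $\R^2$), leaving the stated expansion with $M=a_0/(2\pi)$ and $A_k^{(j)}=a_k/\pi$.

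For the convergence claims I would extract sharp coefficient bounds by applying Parseval on a circle $\partial B_\rho((c_j,c))$ for each $\rho<\eta_1$, obtaining $\sum_k|A_k^{(j)}|^2\rho^k\lesssim\|u(\rho,\cdot)\|_{L^2(0,2\pi)}^2\lesssim\|u\|_{H^1(B_{\eta_1}\setminus\Sigma)}^2$ via a standard trace estimate on an annulus $B_{\eta_1}\setminus B_{\rho/2}$. This yields $|A_k^{(j)}|\le C\eta_1^{-k/2}$, and since $\eta_1>2\eta$ the tail $\sum_{k>n}|A_k^{(j)}|r^{k/2}\le C\sum_{k>n}(r/\eta_1)^{k/2}$ is a convergent geometric series for $r<2\eta$. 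Summing it produces absolute convergence in $H^1(B_\eta((c_j,c))\cap\Omega^\pm)$, uniform convergence on $\overline{B_{2\eta}}((c_j,c))$, and the claimed remainder estimate $K_n r^{(n+1)/2}$ on the traces $u(c_j-r,c\pm 0)$, where the $(-1)^k$ factor appears because $\cos(k\theta/2)$ evaluates to $1$ at $\theta=0$ and to $(-1)^k$ at $\theta=2\pi$.

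The main obstacle I anticipate is the sharp coefficient decay across the crack: $u$ is not smooth on a full circle centred at the tip, and the two traces $u(\rho,0^+)$ and $u(\rho,2\pi^-)$ need not agree because $\Sigma$ is a crack carrying a jump in $u$. I would handle this by working directly with the slit-disc Fourier inner product on $L^2(0,2\pi)$ rather than on the punctured disc, and by controlling $\|u(\rho,\cdot)\|_{L^2}$ through an annular Caccioppoli-type estimate on $B_{\eta_1}\setminus B_\rho$, where only one crack ray intersects the annulus and interior elliptic regularity up to the crack (available from the homogeneous Neumann condition) applies.
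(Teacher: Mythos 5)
Your proposal is correct and takes essentially the same route as the paper, which does not prove Proposition \ref{prop2-1} directly but cites the Grisvard-type eigenfunction expansion for $-\frac{d^2}{d\theta^2}$ on $(0,2\pi)$ with homogeneous Neumann conditions (together with \cite{Ikehata1999} and the appendix of \cite{Ikehata2003b}); your separation-of-variables argument, the use of $H^1$-regularity to discard the singular Euler solutions $r^{-k/2}$ and $\log r$, and the Parseval-based coefficient bound $|A_k^{(j)}|\lesssim (\eta_1')^{-k/2}$ for $2\eta<\eta_1'<\eta_1$ are exactly that adaptation, and the $(-1)^k$ trace factor is correctly read off from $\cos(k\pi)$.
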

The proof is an adaptation of the argument described in \cite{Grisvard1985} which is based on an eigenfunction expansion associated with the operator $-\frac{d^2}{dx^2}$ with homogeneous Neumann boundary condition.
See also \cite{Ikehata1999} and Appendix of \cite{Ikehata2003b}.

\subsection{Proof of Equation \eqref{eq:9}} \label{sec:proofLim}

In what follows we set for simplicity of description
\[
s_0:=s_{\Sigma}(\mbox{\boldmath $\xi$}).
\]

From Proposition \ref{prop2-1} we have, for each $ n=1,2,\cdots $,
\begin{equation}
u(c_j -r,c+0)-u(c_j -r,c-0)=-2\sum_{k=1}^{n}r^{\frac{2k-1}{2}}A^{(j)}_{2k-1}+O\left( r^{\frac{2n+1}{2}}\right).
\label{eq:2-1}
\end{equation}
Next, choose a positive number $\delta$ in such a way that
$$\displaystyle
\overline{B_{s_0+\delta}(\mbox{\boldmath $\xi$}-s_0\mbox{\boldmath $e$}_2)}\cap\Sigma \subset [c_{j-1},c_j]\times\{c\}
$$
and
$$
\displaystyle
\eta_{\delta}:=\sqrt{(s_0+\delta)^2-s_0^2}<\eta .
$$
Set 
$$\displaystyle
\eta_{\delta}'=\sqrt{(s_{0}+\delta)^2-(\xi_{2}-s_{0}-c)^2}-\vert\xi_{1}-c_j\vert.
$$
Then one sees $\eta_{\delta}>\eta'_{\delta}>0$.

Using Proposition \ref{prop2}, we divide $J(\tau ;\mbox{\boldmath $\xi$})$ into two parts:
\begin{eqnarray*}
J(\tau ;\mbox{\boldmath $\xi$})=J_1(\tau)+J_2(\tau),
\end{eqnarray*}
where
$$
\begin{array}{ll}
\displaystyle
J_j(\tau):=-e^{-\frac{\tau}{2s_0}}\int_{\Sigma_j}
(u^+(\mbox{\boldmath $x$})-u^{-}(\mbox{\boldmath $x$}))\frac{\partial v_{\tau}(\mbox{\boldmath $x$}; \mbox{\boldmath $\xi$})}{\partial x_2}\; {\rm d}s_{\mbox{\boldmath $x$}}, & j=1,2,
\end{array}
$$
and
$$\left\{
\begin{array}{l}
\displaystyle
\Sigma_1 := \Sigma\setminus B_{s_0+\delta}(\mbox{\boldmath $\xi$}-s_0\mbox{\boldmath $e$}_2),\\
\\
\displaystyle
\Sigma_2 := \Sigma\cap B_{s_0+\delta}(\mbox{\boldmath $\xi$}-s_0\mbox{\boldmath $e$}_2).
\end{array}
\right.
$$

First we give an upper bound of $J_1(\tau)$.
Since for all $\mbox{\boldmath $x$}\in\Sigma_1$ we have $\vert \mbox{\boldmath $\xi$}-(\mbox{\boldmath $\xi$}-s_0\mbox{\boldmath $e$}_2)\vert\ge s_0+\delta$, one gets
\[
\frac{\mbox{\boldmath $x$}-\mbox{\boldmath $\xi$}}{\vert \mbox{\boldmath $x$}-\mbox{\boldmath $\xi$}\vert^{2}}\cdot\mbox{\boldmath $e$}_{2}+\frac{1}{2s_{0}}=\frac{\vert \mbox{\boldmath $x$}-(\mbox{\boldmath $\xi$}-s_{0}\mbox{\boldmath $e$}_{2})\vert^{2}-s_{0}^{2}}{2s_{0}\vert \mbox{\boldmath $x$}-\mbox{\boldmath $\xi$}\vert^{2}}
\ge C_{3}\eta_{\delta}^{2},
\]
for some positive constant $C_3$ being independent of $\delta$.
This yields
$$\begin{array}{ll}
\displaystyle
e^{-\frac{\tau}{2s_{0}}}\left\vert\frac{\partial v_{\tau}(\mbox{\boldmath $x$};\mbox{\boldmath $\xi$})}{\partial x_2}\right\vert \le C_{4}\tau e^{-C_{3}\eta_{\delta}^{2}\tau},
& 
\displaystyle
\mbox{\boldmath $x$}\in\Sigma_{1},
\end{array}
$$
where $C_4$ is a positive constant.
From a combination of this and \eqref{eq:2-1} we obtain, as $\tau\rightarrow\infty$
\begin{eqnarray}
J_1(\tau)=O\left( \tau e^{-C_{3}\eta_{\delta}^2\tau}\right) .
\label{eq:2-2}
\end{eqnarray}

One of the key points of the proof of \eqref{eq:9} is the following asymptotic formula of $J_2(\tau)$ as $\tau\rightarrow\infty$:
\begin{eqnarray}
J_2(\tau)
=-2\tau e^{-\frac{\tau}{2s_0}}\sum_{k=1}^n A_{2k-1}^{(j)}I_k(\tau)+O\left( \tau^{-\frac{n+1}{2}}\tau^{\frac{3}{4}}\right),
\label{eq:2-3}
\end{eqnarray}
where
\[
I_k(\tau):=\int_0^{\eta_{\delta}'}
\frac{r^{\frac{2k-1}{2}}}{(r-s_0\overline{z_{\alpha}})^2}
\exp\left( \frac{i\tau}{r-s_0\overline{z_{\alpha}}}\right)\; {\rm d}r,
\]
\[
z_{\alpha}:=-\left( e^{-\frac{\pi}{2}i}+ie^{-(\frac{\pi}{2}+\alpha)i}\right)=-\cos\alpha+i(1+\sin\alpha)
\]
and $\alpha\in\,]-\frac{\pi}{2},\,\frac{\pi}{2}[$ is the unique solution of the equation
\[
e^{i\alpha}=\frac{\xi_1-c_j}{s_0}+i\frac{\xi_2-s_0-c}{s_0}.
\]
Note that $(c_j,c)\in\partial B_{s_0}(\mbox{\boldmath $\xi$}-s_0\mbox{\boldmath $e$}_2)$ and $\xi_1>c_j$ ensures the unique existence.

Equation \eqref{eq:2-3} is proved as follows.
First from the definition of $\eta_{\delta}'$ we have
$$\displaystyle
\Sigma_2=\, ]c_j-\eta_{\delta}',\,c_j[\,\times\{c\}.
$$
This together with the change of the variable $r=c_j-t$, we have
$$\displaystyle
J_2(\tau)
=-e^{-\frac{\tau}{2s_0}}
\int_0^{\eta_{\delta}'}
(u(c_j-r,c+0)-u(c_j-r,c-0))\frac{\partial v_{\tau}}{\partial x_2}((c_j-r,c);\mbox{\boldmath $\xi$})\; {\rm d}r.
$$
Then, \eqref{eq:2-1} gives
\begin{equation}\label{eq:2-4}
\begin{array}{l}
\displaystyle
\,\,\,\,\,\,
\left\vert J_2(\tau)+2\tau e^{-\frac{\tau}{2s_0}}\sum_{k=1}^nA_{2k-1}^{(j)}I_k(\tau)\right\vert\\\\
\displaystyle
\le \tilde{C_n}e^{-\frac{\tau}{2s_0}}\int_0^{\eta'_{\delta}}r^{\frac{2n+1}{2}}\left\vert\frac{\partial v_{\tau}}{\partial x_2}((c_j-r,c);\xi)\right\vert dr.
\end{array}
\end{equation}
Here we see that, for $\mbox{\boldmath $x$}=(c_j-r,c)$ we have
\[
\frac{\mbox{\boldmath $x$}-\mbox{\boldmath $\xi$}}{\vert \mbox{\boldmath $x$}-\mbox{\boldmath $\xi$}\vert^{2}}\cdot(\mbox{\boldmath $e$}_{2}+i\mbox{\boldmath $e$}_{1})
=\frac{-i}{r+(\xi_{1}-c_j )+i(\xi_{2}-c)},
\]
and thus
\begin{eqnarray*}
\frac{\partial}{\partial x_{2}}\left(\frac{\mbox{\boldmath $x$}-\mbox{\boldmath $\xi$}}{\vert \mbox{\boldmath $x$}-\mbox{\boldmath $\xi$}\vert^{2}}\cdot(\mbox{\boldmath $e$}_{2}+i\mbox{\boldmath $e$}_{1})\right)
= \frac{1}{\left( r+(\xi_{1}-c_j )+i(\xi_{2}-c) \right)^{2}}.
\end{eqnarray*}
This together with the choice of $\alpha$ yields the expression
\begin{eqnarray*}
\frac{\partial v_{\tau}}{\partial x_{2}}((c_j -r,c);\mbox{\boldmath $\xi$})
= -\frac{\tau}{(r-s_0\overline{z_{\alpha}})^2}\,\exp\left( \frac{i\tau}{r-s_0\overline{z_{\alpha}}}\right).
\end{eqnarray*}
Applying this and the following fact to the right-hand side on \eqref{eq:2-4}, we obtain \eqref{eq:2-3}.

\begin{lemma}\label{lem1}
Let $ n=1,2,\cdots $.
We have, as $\tau\rightarrow\infty$
\[
\tau e^{-\frac{\tau}{2s_0}}
\int_0^{\eta_{\delta}'}
r^{\frac{2n+1}{2}}
\left\vert\frac{1}{(r-s_0\overline{z_{\alpha}})^2}\,\exp\left(
\frac{i\tau}{r-s_0\overline{z_{\alpha}}}\right)\right\vert\; {\rm d}r
=O\left( \tau^{-\frac{n+1}{2}} \tau^{1-\frac{1}{4}}\right).
\]
\end{lemma}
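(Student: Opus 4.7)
The plan is to pass inside the modulus and reduce the claim to a standard Laplace-type estimate. Writing $\zeta := r - s_0\overline{z_\alpha}$ and using $z_\alpha = -\cos\alpha + i(1+\sin\alpha)$, one has $\zeta = (r+s_0\cos\alpha) + is_0(1+\sin\alpha) =: a + ib$, where the hypothesis $\alpha\in(-\pi/2,\,\pi/2)$ guarantees $a>0$ and $b>0$ throughout $[0,\eta_\delta']$. Since $\mathrm{Re}(i\tau/\zeta) = \tau b/(a^2+b^2)$, the modulus of the oscillatory integrand simplifies to
$$\left|\frac{1}{\zeta^{2}}\exp\!\left(\frac{i\tau}{\zeta}\right)\right| \;=\; \frac{1}{a^2+b^2}\,\exp\!\left(\frac{\tau b}{a^2+b^2}\right),$$
so the oscillation drops out entirely and only a real exponential weight remains.

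The decisive step is then the algebraic identity
$$\frac{b}{a^2+b^2} - \frac{1}{2s_0} \;=\; -\,\frac{r\,(r+2s_0\cos\alpha)}{2s_0(a^2+b^2)},$$
obtained by expanding $a^2+b^2 = r^2 + 2rs_0\cos\alpha + 2s_0 b$. This shows precisely that the prefactor $e^{-\tau/(2s_0)}$ in $J_2(\tau)$ cancels the pointwise supremum (attained at $r=0$) of $e^{\tau b/(a^2+b^2)}$, so that the product decays in $r$ linearly at rate $\tau$. Since on the compact interval $[0,\eta_\delta']$ the quantity $a^2+b^2$ is bounded above and $r+2s_0\cos\alpha \ge 2s_0\cos\alpha > 0$, one obtains the uniform lower bound $r(r+2s_0\cos\alpha)/(2s_0(a^2+b^2)) \ge \gamma r$ for some $\gamma > 0$, and a uniform upper bound $1/(a^2+b^2) \le C_6$.

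Combining these, the left-hand side of Lemma \ref{lem1} is dominated by
$$C_6\,\tau\int_0^{\infty}\! r^{(2n+1)/2}\,e^{-\gamma\tau r}\,dr \;=\; \frac{C_6\,\Gamma((2n+3)/2)}{\gamma^{(2n+3)/2}}\,\tau^{-(2n+1)/2},$$
a standard Gamma-function evaluation. Since $(2n+1)/2 > (n+1)/2 - 3/4$ for every $n\ge 1$, this decay is strictly stronger than the claimed $O(\tau^{-(n+1)/2+3/4})$, and the lemma follows.

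The main insight, rather than a genuine obstacle, is the identity in the second paragraph: once one recognises that $e^{-\tau/(2s_0)}$ is precisely the normalisation which turns $e^{\tau b/(a^2+b^2)}$ into a factor decaying linearly in $r\tau$, the rest is a routine Laplace estimate. The geometric content behind the identity is that the centre of the probing disc $B_{s_0}(\xi - s_0\mathbf{e}_2)$ has been chosen so that the relevant tip of $\Sigma$ lies on the boundary, which is exactly what makes $r=0$ the unique critical point of the exponential and is the reason the hypothesis \eqref{eq:7} is invoked.
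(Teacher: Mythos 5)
Your proof is correct and follows essentially the same plan as the paper's: pass inside the modulus, use the algebraic identity
\[
\frac{b}{a^2+b^2}-\frac{1}{2s_0}=-\frac{r(r+2s_0\cos\alpha)}{2s_0(a^2+b^2)}
\]
to cancel the prefactor $e^{-\tau/(2s_0)}$, bound $a^2+b^2$ uniformly from above and below on $[0,\eta_\delta']$, and finish with a standard Laplace-type integral. The one place you diverge is the final pointwise bound on the exponent: the paper retains the quadratic term via $r(r+2s_0\cos\alpha)\ge r^2$, getting $\int_0^{\eta_\delta'}\tau r^{(2n+1)/2}e^{-C_6\tau r^2}\,dr=O(\tau^{(1-2n)/4})$, which matches the lemma's stated rate $O(\tau^{-(n+1)/2+3/4})$ exactly; you instead keep the linear term via $r(r+2s_0\cos\alpha)\ge 2s_0\cos\alpha\cdot r$, getting the strictly tighter rate $O(\tau^{-(2n+1)/2})$. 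Both bounds rely on $\cos\alpha>0$ (guaranteed by $\alpha\in\,]-\pi/2,\pi/2[$), so neither is more general; your version simply shows the lemma is not sharp, though the weaker $O(\tau^{-1/4})$ correction in \eqref{eq:2-5} is all that is needed downstream.
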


\begin{proof}
Noting $2s_0\cos\alpha>0$ in the case when $j$ is odd, we have, for $0<r<\eta_{\delta}'$ 
\begin{eqnarray*}
e^{-\frac{\tau}{2s_0}}
\left\vert\frac{1}{(r-s_0\overline{z_{\alpha}})^2}\,\exp\left(
\frac{i\tau}{r-s_0\overline{z_{\alpha}}}\right)\right\vert
&=& \frac{e^{-\tau/(2s_0)}e^{\tau s_0(1+\sin\alpha)/\vert r-s_0\overline{z_{\alpha}}\vert^2}}{\vert r-s_0\overline{z_{\alpha}}\vert^2}\\
&=& \frac{e^{-\tau r(r+2s_0\cos\alpha)/(2s_0\vert r-s_0\overline{z_{\alpha}}\vert^2)}}{\vert r-s_0\overline{z_{\alpha}}\vert^2}\\
&\le & \frac{e^{-\tau r(r+2s_0\cos\alpha)/(2s_0\vert\eta_{\delta}'-s_0\overline{z_{\alpha}}\vert^2)}}{2s_0^2(1+\sin\alpha)}.
\end{eqnarray*}
Thus, there exist positive constants $C_5$ and $C_6$ such that
\begin{equation*}
\begin{array}{l}
\displaystyle
\,\,\,\,\,\,
\tau e^{-\frac{\tau}{2s_0}}\int_0^{\eta'_{\delta}}r^{\frac{2n+1}{2}}\left\vert\frac{1}{(r-s_0\overline{z_{\alpha}})^2}
\exp\left(\frac{i\tau}{r-s_0\overline{z_{\alpha}}}\right)\right\vert\,dr
\\
\\
\displaystyle
\le
C_5\int_0^{\eta'_{\delta}}\tau r^{\frac{2n+1}{2}}e^{-C_6\tau r^2}\,dr.
\end{array}
\end{equation*}
Since a change of variable yields
\[
\int_{0}^{\eta'_{\delta}}\tau r^{(2n+1)/2}e^{-C_{6}\tau r^{2}}\; {\rm d}r
=O\left( \tau^{-\frac{n+1}{2}}\tau^{1-\frac{1}{4}}\right),
\]
we obtain the desired estimate.\\
\end{proof}

The following fact is another key point for the proof of \eqref{eq:9}.

\begin{lemma}\label{lem1-2}
Let $g\, (\not=0)$ satisfy the conditions {\rm A1.)} or {\rm A2.)}.
Then, there exists an integer $n\ge 1$ such that $A_{2n-1}^{(j)}\not=0$.
\end{lemma}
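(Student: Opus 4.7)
I argue by contradiction: suppose that $A^{(j)}_{2n-1}=0$ for every $n\ge 1$. The strategy is to show that the vanishing of all odd-index expansion coefficients at this one tip propagates along the segment $[0,a]\times\{c\}$ to every interior tip $c_1,\ldots,c_{2m}$, and then to exploit the resulting overdetermination of $u$ together with the hypothesis on $g$.

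Under the contradictory assumption, the series in Proposition \ref{prop2-1} reduces to a uniformly convergent series of harmonic polynomials, so $u$ extends to a harmonic function on the whole disc $B_{2\eta}((c_j,c))$. A direct computation of $\partial_{x_2}$ in the polar coordinates of Figure \ref{fig00-3} then shows that $\partial_{x_2}u$ vanishes on the entire diameter $\{x_2=c\}\cap B_{2\eta}((c_j,c))$, in particular on the welded portion adjacent to $c_j$. Since $u$ is harmonic in a two-dimensional neighbourhood of each welded component $]c_{2i-1},c_{2i}[\,\times\{c\}$, the trace $\partial_{x_2}u(\cdot,c)$ is real-analytic there and must vanish on the whole welded interval once it vanishes on an open subset. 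At the adjacent tip $c_k$, the series expansion of $\partial_{x_2}u(\cdot,c)$ approached along the welded side is
\[
\partial_{x_2}u(\cdot,c)=-\tfrac{1}{2}A^{(k)}_{1}\,r^{-1/2}+\sum_{l\ge 2}(-1)^{l-1}\tfrac{2l-1}{2}A^{(k)}_{2l-1}\,r^{(2l-3)/2},
\]
whose identical vanishing on a one-sided interval at $c_k$ forces $A^{(k)}_{2l-1}=0$ for every $l\ge 1$. To propagate across a crack $[c_{2i},c_{2i+1}]$, I use that $u^{\pm}$ extends across the open crack by Schwarz reflection (thanks to $\partial_{x_2}u^{\pm}=0$ on $\Sigma$), so the jump $[u]:=u^+-u^-$ is real-analytic on the open crack. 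Once the odd coefficients vanish at one endpoint, $[u]$ vanishes on a sub-interval, hence on the entire crack, which in turn forces the odd coefficients at the opposite endpoint to vanish as well. Iterating along the chain $c_1,c_2,\ldots,c_{2m}$, I obtain $A^{(k)}_{2l-1}=0$ for every interior tip $c_k$ and every $l\ge 1$; consequently $[u]\equiv 0$ on $\Sigma$, so $u\in H^1(\Omega)$ is globally harmonic, and $\partial_{x_2}u\equiv 0$ on the full segment $[0,a]\times\{c\}$.

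Green's identity applied on $\Omega^+$ and $\Omega^-$ separately then gives
\[
\int_{\partial\Omega\cap\{x_2>c\}}g\,{\rm d}s_{\mbox{\boldmath $x$}}=0=\int_{\partial\Omega\cap\{x_2<c\}}g\,{\rm d}s_{\mbox{\boldmath $x$}},
\]
which directly contradicts A2. Under A1 both integrals vanish automatically (by the support condition and the mean-freeness of $g$), so an additional step is required: since $g\equiv 0$ on $\partial\Omega\cap\{x_2\le c\}$, the function $u^-$ solves the homogeneous Neumann problem on $\Omega^-$ and is therefore a constant; from $[u]\equiv 0$ it follows that $u^+$ takes this same constant value on the entire segment $[0,a]\times\{c\}$, alongside $\partial_{x_2}u^+=0$ there. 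Holmgren's uniqueness theorem (equivalently, unique continuation of the real-analytic extension of $u^+$ across $[0,a]\times\{c\}$ by Schwarz reflection) then forces $u^+\equiv\text{const}$ in $\Omega^+$, whence $g=\partial u^+/\partial\mbox{\boldmath $\nu$}\equiv 0$, contradicting $g\neq 0$. The main obstacle is the inductive propagation step: one must set up the polar-coordinate expansion at tips of different orientations (left versus right endpoints of cracks) consistently, and verify in each case that the vanishing of $\partial_{x_2}u$ on the welded side, or of $[u]$ on the crack side, is captured by the same family of odd-index coefficients $A^{(k)}_{2l-1}$.
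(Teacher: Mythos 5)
Your proof is correct, but it follows a genuinely different route from the paper's, so let me compare them. The paper handles the propagation in one stroke: it forms the Alessandrini reflected function $v(x_1,x_2)=u^+(x_1,x_2)-u^-(x_1,2c-x_2)$, observes that the vanishing of the odd coefficients at $(c_j,c)$ gives vanishing Cauchy data for $v$ on the short welded stretch adjacent to $c_j$, and then invokes uniqueness of the Cauchy problem once to conclude $v\equiv 0$ on a whole strip. That single unique-continuation step delivers $\partial_{x_2}u=0$ on all of $W$ and $[u]\equiv0$ on all of $\Sigma$ simultaneously, with no iteration. You instead propagate tip-by-tip, alternating two local analyticity arguments (real-analyticity of $\partial_{x_2}u(\cdot,c)$ on each open welded interval, and real-analyticity of the jump $[u]$ on each open crack obtained via the Neumann Schwarz reflection) and reading off the coefficients from Proposition~\ref{prop2-1} at each intermediate tip; this arrives at the same global facts ($[u]\equiv0$, $\partial_{x_2}u\equiv 0$ on $[0,a]\times\{c\}$), at the cost of more bookkeeping. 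From that point on the two arguments coincide: Green's identity on $\Omega^\pm$ kills both half-boundary integrals of $g$, contradicting A2 directly, and the extra step under A1 (constancy of $u^-$, matching along $x_2=c$, unique continuation of $u^+$) is identical. Your approach is more elementary and self-contained in that it avoids citing the Alessandrini reflection machinery, but it requires the careful verification, which you flag yourself, that the half-integer-power expansions at tips of both orientations are tracked consistently.

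One small omission relative to the paper: before applying Green's identity on $\Omega^\pm$ one must argue that $u^\pm$ are regular enough at the corner points of $\Omega^\pm$ (the four corners of $\Omega$ and the points $(0,c)$, $(a,c)$), so that the boundary integral $\int_{\partial\Omega^\pm}\partial_\nu u\,{\rm d}s$ makes sense with no hidden contributions. The paper justifies this via the support restrictions in A1/A2, which keep $g$ away from all corners and make them removable singularities of $u^\pm$; in fact this is precisely why A2 contains the exclusion of the small balls around the corners. Your proof invokes A1/A2 only at the final contradiction, but they are also needed here.
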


\begin{proof}
We basically follow the flow of the argument done in Lemma 2 in \cite{Ikehata2016}.
However, since the condition A2.) is new, we present the proof precisely.
We employ a contradiction argument.
Assume that the conclusion is not true.
Then, from the assumption we see that the expression in Proposition \ref{prop2-1} becomes
$$\displaystyle
u(r,\theta)=M+\sum_{n=1}^{\infty} r^nA_{2n}^{(j)}\cos n\theta.
$$
This right-hand side gives $\frac{\partial u}{\partial x_2}(c_j+r,c)=0$ for $0<r\ll 1$.
Then, we see that the harmonic function $u^{+}(x_1,x_2)-u^{-}(x_1,2c-x_2)$ with $0<x_1<a$ and $c<x_2<\min(c, b-c)$ has vanishing Cauchy data on $c_j<x_1<c_j +r $ and $x_2=c$.
Thus the uniqueness of the Cauchy problem for the Laplace equation enables us to conclude that $u^{+}(x_1,x_2)-u^{-}(x_1,2c-x_2)=0$ with $0<x_1<a$ and $c<x_2<\min(c,b-c)$.
This is a reflection argument in \cite{Alessandrini1997}.
Then taking the Neumann derivative on $W$ from $x_2>c$, one concludes
$\frac{\partial u}{\partial x_2}(\mbox{\boldmath $x$})=0$ for all $\mbox{\boldmath $x$}\in W$.
Combining this with the boundary condition on $\Sigma$, we see that all the tips of $\Sigma$ are removable singularities of $u^+$ and $u^{-}$.
Moreover, from the assumption on the support of $g$ in A1.) or A2.) one concludes all the corner points of $\Omega^{+}$ and $\Omega^{-}$ are also removable singularities of $u^+$ and $u^{-}$, respectively.
Therefore one gets
$$\displaystyle
\int_{\partial\Omega^{\pm}}\frac{\partial u}{\partial\mbox{\boldmath $\nu$}}\; {\rm d}s_{\mbox{\boldmath $x$}}=0.
$$
This yields 
$$\displaystyle
\int_{\partial\Omega\cap\{x_2>c\}}g\; {\rm d}s_{\mbox{\boldmath $x$}}=0
$$
and
$$\displaystyle
\int_{\partial\Omega\cap\{x_2<c\}}g\; {\rm d}s_{\mbox{\boldmath $x$}}=0.
$$
This tells us that $A_{2n-1}^{(j)}=0$ for all $n=1,2,\cdots$ never occur if $g$ satisfies A2.).
Moreover, if $g$ satisfies A1.), then one gets $\frac{\partial u}{\partial\mbox{\boldmath $\nu$}}=0$ on $\partial\Omega^{-}$ and thus $u^-$ has to be a constant in $\Omega^{-}$.
Passing through $x_2=c$, one gets $u^+$ also has to be the same constant.
Then, one gets $g=0$.
This contradicts $g\not=0$.\\
\end{proof}

Now let us assume that A1.) or A2.) are satisfied with $g\not=0$.
By Lemma \ref{lem1-2}, we can find the smallest $n\ge 1$ such that $A_{2n-1}^{(j)}\not=0$.
Set
$$\displaystyle
N:=\min\{n\ge 1\,\vert A_{2n-1}^{(j)}\not=0\}.
$$
From \eqref{eq:2-2} and \eqref{eq:2-3} one can write
\begin{equation}\label{eq:2-5}
\begin{split}
J(\tau ;\mbox{\boldmath $\xi$}) = &2A_{2N-1}^{(j)}
\tau^{1-\frac{2N+1}{2}}e^{-\frac{\tau}{2s_0}}\tau^{\frac{2N+1}{2}}I_N(\tau)  \\
&+\sum_{n=N+1}^{2N}2A_{2n-1}^{(j)}
\tau^{1-\frac{2n+1}{2}}e^{-\frac{\tau}{2s_0}}\tau^{\frac{2n+1}{2}}I_n(\tau)
+O\left( \tau^{-\frac{2N+1}{2}}\tau^{1-\frac{1}{4}}\right).
\end{split}
\end{equation}

Here we state a new key lemma which has not been covered in \cite{Ikehata2016} and whose proof is given in the next section.

\begin{lemma}\label{lem1-3}
Let $n=1,2, \cdots$ and $\alpha \in\, \bigr] -\frac{\pi}{2},\frac{\pi}{2} \bigl[$.
We have
\[
\begin{array}{l}
\displaystyle
\,\,\,\,\,\,
\lim_{\tau\rightarrow\infty}
\tau^{\frac{2n+1}{2}}
e^{-\frac{\tau}{2s_0}}
e^{-\frac{i\tau\cos\alpha}{2s_0(1+\sin\alpha)}}I_n(\tau)
\\
\\
\displaystyle
=-is_0^{2n-1}2^{\frac{2n-1}{2}}(1+\sin\alpha)^{\frac{2n-1}{2}}
e^{i\frac{(2n-1)\alpha}{2}}
\Gamma\left(\frac{2n+1}{2}\right).
\end{array}
\]
\end{lemma}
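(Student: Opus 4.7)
The plan is to recognize $e^{-\tau/(2s_0)}e^{-i\tau\cos\alpha/(2s_0(1+\sin\alpha))}I_n(\tau)$ as a complex Laplace-type integral on $[0, \eta'_\delta]$ and apply the method of steepest descent. Setting $w := -s_0\overline{z_\alpha} = s_0\cos\alpha + is_0(1+\sin\alpha)$ (so that $r - s_0\overline{z_\alpha} = r + w$ and $|w|^2 = 2s_0^2(1+\sin\alpha)$), I absorb the two external exponentials into the integrand and rewrite
\[
e^{-\tau/(2s_0)}e^{-i\tau\cos\alpha/(2s_0(1+\sin\alpha))}I_n(\tau) = \int_0^{\eta'_\delta}\frac{r^{(2n-1)/2}}{(r+w)^2}e^{\tau g(r)}\,dr,
\]
where $g(r) := i/(r+w) - 1/(2s_0) - i\cos\alpha/(2s_0(1+\sin\alpha))$.

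A direct computation yields $g(0) = 0$,
\[
g'(0) = -\frac{i}{w^2} = -\frac{e^{-i\alpha}}{2s_0^2(1+\sin\alpha)},
\]
and, by the same algebra that appears in the proof of Lemma~\ref{lem1},
\[
\mathrm{Re}\,g(r) = -\frac{r(r + 2s_0\cos\alpha)}{2s_0\,|r + w|^2},
\]
which is strictly negative for $r > 0$ because $\cos\alpha > 0$ on $(-\pi/2, \pi/2)$. Thus the integral is a Laplace-type integral whose unique critical point is the left endpoint $r = 0$.

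Next, I split at a fixed $\delta_0 \in (0, \eta'_\delta)$. The tail on $[\delta_0, \eta'_\delta]$ is $O(e^{-c\tau})$ by the strict negativity of $\mathrm{Re}\,g$ there, hence negligible against the anticipated $\tau^{-(2n+1)/2}$. On $[0, \delta_0]$ I Taylor-expand $g(r) = \gamma r + O(r^2)$ with $\gamma := g'(0)$ and $(r+w)^{-2} = w^{-2} + O(r)$, extend the upper limit to $+\infty$ modulo an exponentially small error, and reduce the leading piece to
\[
\frac{1}{w^2}\int_0^\infty r^{(2n-1)/2}e^{\tau\gamma r}\,dr = \frac{1}{w^2}\,\Gamma\!\left(\frac{2n+1}{2}\right)(-\tau\gamma)^{-(2n+1)/2}.
\]
The substitution $u = -\tau\gamma r$ is legitimized by a contour rotation: since $-\gamma = e^{-i\alpha}/(2s_0^2(1+\sin\alpha))$ lies in the open right half-plane ($|\arg(-\gamma)| = |\alpha| < \pi/2$), the integrand is analytic on the sector bounded by the positive real axis and the ray $\{|r|e^{i\alpha}\}$, and it decays along the closing arc, so the integral along the rotated ray equals the standard Gamma-function integral.

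Using $w^2 = 2is_0^2(1+\sin\alpha)e^{i\alpha}$ and $1/w^2 = -ie^{-i\alpha}/(2s_0^2(1+\sin\alpha))$, the powers of $2s_0^2(1+\sin\alpha)$ and the phases $e^{\pm i\alpha}$ combine after multiplication by $\tau^{(2n+1)/2}$ to yield precisely the constant $-is_0^{2n-1}2^{(2n-1)/2}(1+\sin\alpha)^{(2n-1)/2}e^{i(2n-1)\alpha/2}\Gamma((2n+1)/2)$. The main technical obstacle is the rigorous bookkeeping of the subleading terms: one must verify that the $O(r)$ and $O(r^2)$ corrections in the Taylor expansions of $(r+w)^{-2}$ and $g$ produce integrals bounded by $\tau^{-(2n+3)/2}$, which is indeed of lower order after multiplication by $\tau^{(2n+1)/2}$. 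This step requires Watson's-lemma-style estimates tailored to the algebraic endpoint singularity $r^{(2n-1)/2}$ combined with the complex phase $g$, and is the principal obstacle in turning the heuristic steepest-descent computation above into a rigorous proof.
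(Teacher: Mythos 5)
Your proposal is correct and takes a genuinely different route from the paper. The paper's proof deforms the contour from $[0,\eta'_\delta]$ to the steepest descent arc $C$ (the circle on which $\mathrm{Im}\,\bigl(i/(z-s_0\overline{z_\alpha})\bigr)$ is constant and equal to its value at $z=0$), parametrizes by $\varphi$, changes variables to $x=\alpha+\pi/2-\varphi$, and then applies classical Watson's lemma to a Laplace integral with a \emph{real-valued} phase $\tau f(x)$ satisfying $f'(0)<0$. By contrast, you keep the contour on the real axis, exploit the identity $\mathrm{Re}\,g(r)=-r(r+2s_0\cos\alpha)/(2s_0\,|r+w|^2)\le -cr$ (which is the same computation as in Lemma~\ref{lem1}), and run a complex-phase Laplace/Watson argument directly, with a small contour rotation only to evaluate the resulting Gamma integral $\int_0^\infty r^{(2n-1)/2}e^{\tau\gamma r}\,dr = \Gamma\!\left(\tfrac{2n+1}{2}\right)(-\tau\gamma)^{-(2n+1)/2}$ (valid since $\mathrm{Re}(-\gamma)>0$). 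The algebra you give, based on $w^2 = 2is_0^2(1+\sin\alpha)e^{i\alpha}$ and $\gamma = -i/w^2 = -e^{-i\alpha}/(2s_0^2(1+\sin\alpha))$, reproduces the stated constant exactly. The error analysis you flag as ``the principal obstacle'' is in fact routine once you have the linear decay of $\mathrm{Re}\,g$: replacing $(r+w)^{-2}$ by $w^{-2}$ costs $O\bigl(\int_0^{\delta_0}r^{(2n+1)/2}e^{-c\tau r}\,dr\bigr)=O(\tau^{-(2n+3)/2})$, and replacing $e^{\tau g(r)}$ by $e^{\tau\gamma r}$ costs, via $|e^{a}-e^{b}|\le |a-b|\,e^{\max(\mathrm{Re}\,a,\mathrm{Re}\,b)}$, at most $O\bigl(\tau\int_0^{\delta_0}r^{(2n+3)/2}e^{-c''\tau r}\,dr\bigr)=O(\tau^{-(2n+3)/2})$; both are subleading against the main term $\sim\tau^{-(2n+1)/2}$. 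What the paper's approach buys is that, once on the steepest descent path, the phase is literally real and Watson's lemma applies verbatim with no complex-phase bookkeeping; what yours buys is that you avoid the geometric identification and parametrization of the arc $C$ and the $O(\tau^{-\infty})$ estimate for closing the contour, at the price of the (entirely manageable) error estimates above.
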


Thus, applying Lemma \ref{lem1-3} to each $n=N,\cdots,2N$, it follows from \eqref{eq:2-5} that
\[
\tau^{\frac{2N+1}{2}-1}J(\tau ;\mbox{\boldmath $\xi$})
=2A_{2N-1}^{(j)}e^{-\frac{\tau}{2s_0}}\tau^{\frac{2N+1}{2}}I_N(\tau)
+O\left(\tau^{-\frac{1}{4}}\right)
\]
and finally we obtain \eqref{eq:9} with
\[
B=\vert A_{2N-1}^{(j)}\vert
s_0^{2N-1}2^{\frac{2N+1}{2}}
(1+\sin\alpha)^{\frac{2N-1}{2}}\Gamma\left(\frac{2N+1}{2}\right).
\]
Thus, to complete the proof of Theorem \ref{th1} it suffices to prove Lemma \ref{lem1-3}.

\subsection{Proof of Lemma \ref{lem1-3}}

For the proof of Lemma \ref{lem1-3} we also treat only the case when $j$ is odd, however, the procedure of the proof is same in the case $j$ is even.
We apply the {\it method of steepest descent} to the integral $I_n(\tau)$.

\subsubsection{Step 1}

Let $z=\eta e^{i\gamma}$ with $\gamma\in\R$ and $\eta>0$.
We have
\begin{eqnarray*}
\left\{
\begin{array}{l}
\vspace{0.3cm}
\displaystyle
z-s_0\overline{z_{\alpha}} = \eta\cos\gamma+s_0\cos\alpha+i\{\eta\sin\gamma+s_0(1+\sin\alpha)\},\\
\displaystyle
\vert z-s_0\overline{z_{\alpha}}\vert^2 = \eta^2+2\eta s_0\sin\gamma+2\eta s_0\cos(\gamma-\alpha)+2s_0^2(1+\sin\alpha).
\end{array}
\right.
\end{eqnarray*}
This gives
\[
{\bf Re}\,\frac{i}{z-s_0\overline{z_{\alpha}}}
=\frac{\eta\sin\gamma+s_0(1+\sin\alpha)}{\vert z-s_0\overline{z_{\alpha}}\vert^2}
\]
and thus
\[
{\bf Re}\,\frac{i}{z-s_0\overline{z_{\alpha}}}-\frac{1}{2s_0}
=-\frac{\eta(\eta+2s_0\cos(\gamma-\alpha))}{2s_0\vert z-s_0\overline{z_{\alpha}}\vert^2}.
\]
Therefore if $\vert\gamma-\alpha\vert\le \frac{\pi}{2}$, we have
\begin{eqnarray}
{\bf Re}\,\frac{i}{z-s_0\overline{z_{\alpha}}}-\frac{1}{2s_0}
\le -\frac{\eta^2}{2s_0(\eta+2s_0)^2}.
\label{eq:2-6}
\end{eqnarray}

\subsubsection{Step 2}

We seek the set of all $z$ such that
\[
{\bf Im}\,\left(\frac{i}{z-s_0\overline{z_{\alpha}}}\right)
={\bf Im}\,\left.\left(\frac{i}{z-s_0\overline{z_{\alpha}}}\right)\right|_{z=0}.
\]
A simple calculation gives
$$\begin{array}{ll}
\displaystyle
z=\frac{s_0(1+\sin\alpha)}{\cos\alpha}(e^{i\varphi}-ie^{i\alpha}), & \varphi\in\R.
\end{array}
$$
Then, we have also
\begin{eqnarray}
\left\{
\begin{array}{l}
\vspace{0.3cm}
\displaystyle
z-s_0\overline{z_{\alpha}}
=\frac{s_0(1+\sin\alpha)}{\cos\alpha}
(e^{i\varphi}+1),\\
\displaystyle
\frac{i}{z-s_0\overline{z_{\alpha}}}
=\frac{\cos\alpha}{s_0(1+\sin\alpha)}
\left(\frac{\sin\varphi}{2(1+\cos\varphi)}+\frac{i}{2}\right).
\end{array}
\right.
\label{eq:2-7}
\end{eqnarray}

\subsubsection{Step 3}

Choose $\eta$ in Proposition \ref{prop2-1} in such a way that
\[
0<\eta<\frac{s_0(1+\sin\alpha)}{\cos\alpha}
\]
and the corresponding $\eta'_{\delta}$ in Subsection 2.2.
Define the curve
$$\begin{array}{ll}
\displaystyle
C: z=\frac{s_0(1+\sin\alpha)}{\cos\alpha}(e^{i\varphi}-ie^{i\alpha}), & 
\displaystyle
\alpha-\frac{\pi}{2}\le\varphi\le\alpha+\frac{\pi}{2}.
\end{array}
$$

This is an arc of the circle passing through $z=0$ at $\varphi=\alpha+\frac{\pi}{2}$ with radius $\frac{s_0(1+\sin\alpha)}{\cos\alpha}$ centered at
$P=\left(\frac{s_0\sin\alpha(1+\sin\alpha)}{\cos\alpha}, -s_0(1+\sin\alpha)\right)$.
Now changing the contour in $I_n(\tau)$ with the help of the Cauchy integral formula together with \eqref{eq:2-6} and \eqref{eq:2-7}, see Figure \ref{fig10} for an illustration, one has
\begin{equation} \label{eq:2-8}
\begin{split}
e^{-\frac{\tau}{2s_0}}I_n(\tau) &= e^{-\frac{\tau}{2s_0}}\int_{(-C)\cap B_{\eta_{\delta}'}(0)}
\frac{z^{\frac{2n-1}{2}}}
{(z-s_0\overline{z_{\alpha}})^2} e^{i\tau\frac{i}{z-s_0\overline{z_{\alpha}}}}\; {\rm d}z +O\left(\tau^{-\infty}\right) \\
&= -i\left( \frac{s_{0}(1+\sin{\alpha})}{\cos{\alpha}} \right)^{n-\frac{3}{2}}e^{-\frac{\tau}{2s_0}}e^{\frac{i\tau\cos{\alpha}}{2s_{0}(1+\sin{\alpha})}}\tilde{I}_{n}(\tau )+O\left( \tau^{-\infty}\right),
\end{split}
\end{equation}
where
\begin{equation} \label{eq:2-9}
\tilde{I}_{n}(\tau ) := \int_{\alpha+\frac{\pi}{2}-\epsilon (\eta'_{\delta})}^{\alpha+\frac{\pi}{2}}\frac{e^{i\varphi}(e^{i\varphi}-ie^{i\alpha})^{\frac{2n-1}{2}}}{(1+e^{i\varphi})^2} {\rm exp} \left\{ \frac{\tau\cos{\alpha}}{s_{0}(1+\sin{\alpha})}\cdot\frac{\sin{\varphi}}{2(1+\cos{\varphi})} \right\}\; {\rm d}\varphi  
\end{equation}
and $0<\epsilon (\eta'_{\delta})\ll 1$ satisfies
$$
\cos{\epsilon (\eta'_{\delta})}=1-\frac{(\eta'_{\delta})^2\cos^{2}{\alpha}}{2s_{0}^{2}(1+\sin{\alpha})^{2}}.
$$

\begin{figure}[ht!]
\centering
\begin{picture}(200,200)
\includegraphics[width=200pt]{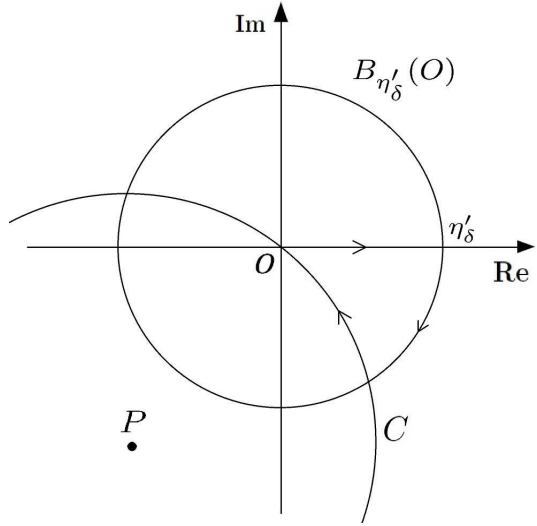}
\end{picture}
\caption{\label{fig10}An example of the contour in the case $-\frac{\pi}{2}<\alpha <0$}
\end{figure}

Here we make use of the change of variable $\varphi$ in the right-hand side on \eqref{eq:2-9} as follows:
\[
\varphi =\alpha +\frac{\pi}{2} -x,\quad 0\leq x \leq \epsilon (\eta'_{\delta})\ll 1.
\]
Using the expressions
$$\left\{
\begin{array}{cc}
\displaystyle
e^{i\varphi}=ie^{i(\alpha -x)}, &
\displaystyle
e^{i\varphi}-ie^{i\alpha}=2\sin{\frac{x}{2}}e^{i\left( \alpha -\frac{x}{2} \right)}, \\
\\
\displaystyle
(1+e^{i\varphi})^2=4i\sin^{2}{\frac{1}{2}\left( \frac{\pi}{2}-\alpha -x \right)}e^{i(\alpha -x)},
&
\displaystyle 
\frac{\sin{\varphi}}{1+\cos{\varphi}}=\frac{\cos{(\alpha -x)}}{1-\sin{(\alpha -x)}},
\end{array}
\right.
$$
one can rewrite
$$\displaystyle
\tilde{I}_{n}(\tau ) = \int^{\epsilon (\eta'_{\delta})}_{0} \frac{\left( 2\sin{\frac{x}{2}}\right)^{n-\frac{1}{2}} e^{i\left( n-\frac{1}{2}\right)\left( \alpha -\frac{x}{2} \right)}}{4\sin^{2}{\frac{1}{2}\left( \frac{\pi}{2}-\alpha -x \right)}} e^{\tau f(x)} \; {\rm d}x,
$$
where
$$\displaystyle
f(x):=\frac{\cos{\alpha}}{s_{0}(1+\sin{\alpha})}\cdot\frac{\cos{(\alpha -x)}}{2(1-\sin{(\alpha -x)})}.
$$
Note that $f(0)=\frac{1}{2s_{0}}$ and also $\displaystyle f'(0)=-\frac{1}{2s_{0}\cos{\alpha}}<0$ for all $\alpha \in\, \bigr] -\frac{\pi}{2},\frac{\pi}{2} \bigl[ $.
Thus, from Watson's lemma (cf. \cite{Olver1997}) we have, as $\tau\rightarrow\infty$
\begin{eqnarray*}
e^{-\frac{\tau}{2s_0}}\tilde{I}_{n}(\tau ) & = & 
\int^{\epsilon (\eta'_{\delta})}_{0} \frac{\left( 2\sin{\frac{x}{2}}\right)^{n-\frac{1}{2}} e^{i\left( n-\frac{1}{2}\right)\left( \alpha -\frac{x}{2} \right)}}{4\sin^{2}{\frac{1}{2}\left( \frac{\pi}{2}-\alpha -x \right)}} e^{\tau (f(x)-f(0))}\; {\rm d}x\\
&&\;\\
&\sim & \int^{\epsilon (\eta'_{\delta})}_{0} \frac{x^{n-\frac{1}{2}} e^{i\left( n-\frac{1}{2}\right)\alpha}}{4\sin^{2}{\frac{1}{2}\left( \frac{\pi}{2}-\alpha \right)}} e^{\tau f'(0)x} \; {\rm d}x\\
&&\;\\
&=& \frac{e^{i\left( n-\frac{1}{2}\right)\alpha}}{4\sin^{2}{\frac{1}{2}\left( \frac{\pi}{2}-\alpha \right)}}\int^{\epsilon (\eta'_{\delta})}_{0}x^{n-\frac{1}{2}} e^{-\tau |f'(0)|x} \; {\rm d}x\\
&&\;\\
&=& \frac{e^{i\left( n-\frac{1}{2}\right)\alpha}}{4\sin^{2}{\frac{1}{2}\left( \frac{\pi}{2}-\alpha \right)}}\left( \frac{1}{|f'(0)|}\right)^{n+\frac{1}{2}}\frac{1}{\tau^{n+\frac{1}{2}}}\int^{\tau |f'(0)|\epsilon (\eta'_{\delta})}_{0}y^{n-\frac{1}{2}} e^{-y} \; {\rm d}y\\
&&\;\\
&\sim & \frac{e^{i\left( n-\frac{1}{2}\right)\alpha}}{4\sin^{2}{\frac{1}{2}\left( \frac{\pi}{2}-\alpha \right)}}\left( \frac{1}{|f'(0)|}\right)^{n+\frac{1}{2}}\frac{1}{\tau^{n+\frac{1}{2}}}\Gamma\left( n+\frac{1}{2}\right)\\
&&\;\\
&= & \frac{e^{i\left( n-\frac{1}{2}\right)\alpha} (2s_{0}\cos{\alpha})^{n+\frac{1}{2}}}{2(1-\sin{\alpha})}\Gamma\left( n+\frac{1}{2}\right)\frac{1}{\tau^{n+\frac{1}{2}}}.
\end{eqnarray*}
Now a combination of this and \eqref{eq:2-8} yields, as $\tau\rightarrow\infty$
$$\displaystyle
e^{-\frac{\tau}{2s_{0}}}I_{n}(\tau ) \sim -i s_{0}^{2n-1}2^{n-\frac{1}{2}}(1+\sin{\alpha})^{n-\frac{1}{2}} e^{i\left( n-\frac{1}{2}\right)\alpha} 
e^{\frac{i\tau\cos{\alpha}}{2s_{0}(1+\sin{\alpha})}}\Gamma\left( n+\frac{1}{2}\right)\frac{1}{\tau^{n+\frac{1}{2}}},
$$
which directly leads to Lemma \ref{lem1-3}.\\

\section{Numerical considerations}\label{sec:numericalConsider}
A computational implementation of the enclosure method is not straight-forward due to the limit in \eqref{eq:8}, i.e. as $\tau\to\infty$. Naturally this limit can not be computationally evaluated, but this limitation can be overcome by rather observing the asymptotic behavior of the indicator function $\log|I(\tau;\xibf)|$ that already shows for small values of $\tau$. This modification has been introduced first by the two studies \cite{Ikehata2000a} and \cite{Bruhl2000}. Specifically, this is done by evaluating the indicator function for a set of values $\tau$ and estimating the slope of $\log|I(\tau;\xibf)|$ will then give a robust estimate of the desired support function. In the following we will discuss the specific modifications needed for the application by the Kelvin transformed probing function.



\subsection{Numerical considerations for the Kelvin transform}\label{sec:NumericalKelvin}
We will review here the essential steps needed to implement the enclosure method with Kelvin transformed probing functions. This transformation is necessary to detect the crack tips inside the domain. 

Before we can evaluate the indicator function, we need to obtain a pair of Cauchy data for equation \eqref{eq:0}, which can be obtained by a FEM simulation with fixed Neumann data $g$. We will model the crack as a cavity with conductivity zero and small width, as similarly done in \cite{Ikehata2002c,Ikehata2008}. Having obtained a pair of Cauchy data $(g,u)$ we can now start the probing procedure. For the convenience of the reader, we will repeat the essential functions and equations for the computations here.
First of all, we need the special solution given in equation \eqref{eq:0} for $\mbox{\boldmath $\xi$}\in\R^2\setminus\overline{\Omega}$ by
\begin{equation*}
v_{\tau}(\mbox{\boldmath $x$};\mbox{\boldmath $\xi$})=\exp\left\{-\tau\frac{\mbox{\boldmath $x$}-\mbox{\boldmath $\xi$}}{\vert \mbox{\boldmath $x$}-\mbox{\boldmath $\xi$}\vert^2}\cdot(\mbox{\boldmath $e$}_2+i\mbox{\boldmath $e$}_1)\right\},\quad \mbox{\boldmath $x$}\in\R^2\setminus\{\mbox{\boldmath $\xi$}\}.
\end{equation*}
Then the associated indicator function is given by
\begin{equation}\label{eqn:kelvinIndicator}
I(\tau ;\mbox{\boldmath $\xi$}):=\int_{\partial\Omega}
\left(g(\mbox{\boldmath $x$})v_{\tau}(\mbox{\boldmath $x$};\mbox{\boldmath $\xi$})-u(\mbox{\boldmath $x$})\frac{\partial v_{\tau}(\mbox{\boldmath $x$};\mbox{\boldmath $\xi$})}{\partial\mbox{\boldmath $\nu$}_{\mbox{\boldmath $x$}}}\right)\; {\rm d}s_{\mbox{\boldmath $x$}},
\end{equation}
or in the partial boundary case by $I_{\delta}(\tau ;\mbox{\boldmath $\xi$})$ as discussed in Remark \ref{rem1}.

Let us fix $\mbox{\boldmath $\xi$}\in \R^2\setminus\overline\Omega$ which is the ``north pole'' of the probing disc
$B_s(\mbox{\boldmath $\xi$}-s\mbox{\boldmath $e$}_2)$. Then for the evaluation of the indicator function $I(\tau ;\mbox{\boldmath $\xi$})$ we need to compute the normal derivatives of the special solution $v_{\tau}(\mbox{\boldmath $x$};\mbox{\boldmath $\xi$})$. 
Given the specific rectangular geometry under consideration, the normal vectors $\mbox{\boldmath $\nu$}=(\nu_1,\nu_2)$ reduce to the unit vectors on the edges 
\[
\mbox{\boldmath $\nu$}=
\left\{ \begin{array}{ll}
\vspace{0.3cm}
\pm \mbox{\boldmath $e$}_1 & \text{on } \partial\Omega_{x_1} := \{ \mbox{\boldmath $x$} \in \partial\Omega\ | \ x_1\in \{0,a\},\ 0<x_2<b\}, \\
\vspace{0.3cm}
\pm \mbox{\boldmath $e$}_2 & \text{on } \partial\Omega_{x_2} := \{ \mbox{\boldmath $x$} \in \partial\Omega\ | \ 0<x_1<a,\ x_2\in \{0,b\}\}.\\
\end{array} \right.
\]

The normal derivatives are then given by

\begin{eqnarray*}
\frac{\partial}{\partial\mbox{\boldmath $\nu$}} v_\tau(\xbf;\xibf) &=\mp \tau i \cdot  \frac{ \left((\xbf-\xibf)\cdot(\mbox{\boldmath $e$}_2+i\mbox{\boldmath $e$}_1)\right) ^2}{|\xbf-\xibf|^4}  e^{-\tau \frac{\xbf-\xibf}{|\xbf-\xibf|^2} \cdot (\mbox{\boldmath $e$}_2+i\mbox{\boldmath $e$}_1)}, \text{ on } \partial\Omega_{x_1}, \\
\frac{\partial}{\partial\mbox{\boldmath $\nu$}} v_\tau(\xbf;\xibf) &= \mp \tau \cdot \frac{ \left((\xbf-\xibf)\cdot(\mbox{\boldmath $e$}_1-i\mbox{\boldmath $e$}_2)\right) ^2}{|\xbf-\xibf|^4}  e^{-\tau \frac{\xbf-\xibf}{|\xbf-\xibf|^2} \cdot (\mbox{\boldmath $e$}_2+i\mbox{\boldmath $e$}_1)}, \text{ on } \partial\Omega_{x_2}.
\end{eqnarray*}


Motivated by Theorem \ref{th1} we would need to evaluate equation \eqref{eq:8}
\begin{equation*}
\lim_{\tau\rightarrow\infty}\frac{\log\vert I(\tau ;\mbox{\boldmath $\xi$})\vert}{\tau}=\frac{1}{2s_{\Sigma}(\mbox{\boldmath $\xi$})},
\end{equation*}
but since this is numerically not feasible we rather obtain an approximation  for a set of finite values  $\tau\in[1,T]$, as suggested in \cite{Bruhl2000,Ikehata2000a}. 
This way we can compute the right hand side for a collection of finite $\tau$ and  $\Phi(\xi_1)=\frac{1}{2}S_{\Sigma}^{-1}(\xi_1,\xi_2)$ corresponds to the slope of $\log|I(\tau;\xibf)|$, i.e. computing the approximation
\begin{equation}\label{eqn:estimateSlope}
\frac{\tau}{2s_{\Sigma}(\mbox{\boldmath $\xi$})} \approx  \log|I(\tau;x)|
\end{equation}
is a practical version of the infinite-precision formula \eqref{eq:8}. Estimating the slope of $\tau$ by linear regression gives a robust estimate for $\Phi(\xi_1)$.

\subsection{A probing algorithm for monitoring during RSW}
Let us now formulate a probing algorithm for monitoring the progress during the process of resistance spot welding (RSW). The purpose of the algorithm is to create a profile of the metal slab with indicators of the crack tips and by that evaluate when the procedure has been successful. The proposed algorithm is motivated by Theorem \ref{th1} and the characteristics of the probing function \eqref{eq:probingFunc} outlined in Section \ref{sec:probingAlgoTheo}. 

Let us fix a pressure point $\xbf^*$ where RSW takes place, then we call the process successful if the joined area around $\xbf^*$ is large enough. To determine this we record the first maxima left and right to $\xbf^*$ as $x_L$ and $x_R$, respectively. Then the area of the joined metal pieces is simply $x_R-x_L$, we call this the gap in the following. If the gap is determined as large enough we move the pressure point and perform RSW at the next point $\xbf^*$. The procedure for monitoring at each pressure point is outlined in Algorithm \ref{alg:monitor}.

\begin{algorithm}
	\caption{Probing algorithm for monitoring progress during RSW}
	\label{alg:monitor}
	\begin{algorithmic}
		\Statex
		\Function{monitorRSW}{$\xbf^*$,$\epsilon$,thresh}
		\Let{$\mathcal{T}$}{$[1,T]$}
        \Let{$\mathcal{X}$}{$[0,a]$}
        \Let{$\xi_2$}{$b+\epsilon$}
        \Let{$|\text{gap}|$}{0}
        \While{$|\text{gap}| < $ thresh }
          \State Obtain Cauchy data pair $(g,u)$
          \For{$\xi_1\in \mathcal{X}$}
            \For{$\tau\in\mathcal{T}$}
            	\State Evaluate \eqref{eqn:kelvinIndicator}: $I(\tau;\xibf)$ 
            \EndFor
            \State Estimate $\Phi(\xi_1)$: slope of \eqref{eqn:estimateSlope} by linear regression
          \EndFor
          \Let{($x_L,x_R)$}{maxima left and right to $\xbf^*$} 
          \Let{$|\text{gap}|$}{$x_R-x_L$}
		\EndWhile
        \State \Return{Success at $x^*$, move to next spot}
		\EndFunction
	\end{algorithmic}
\end{algorithm}

\section{Computational experiments}\label{sec:compExperiments}
In this section we will present the resulting profiles that can be computed by the Kelvin transformed enclosure method for an idealized application to RSW. 
The basic setup for all experiments is as follows. We choose the computational domain as $\Omega=[-4,4]\times[-0.2,0.2]$, this corresponds to $a=8$, $b=0.4$, $c=0.2$ in the theory part, note a shift of the center. The input current is given by continuous boundary data as $\varphi(\theta)=\sin(3\theta)$, with an arc-length parametrization for the domain $\Omega$. Furthermore, we let the Neumann data be only partially supported on top and bottom excluding the corner points, such that $\supp(\varphi)\subset\{x\in\bndry:|x_1|<4\}$, note that the arc-length parametrization has to be adjusted to the partial boundary. The FEM mesh is build in the MATLAB PDE toolbox as triangular mesh with 30720 elements, to be fine enough to resolve small cracks. The crack width is chosen as 0.04 in all experiments and is simulated as a cavity with conductivity 0. For evaluating \eqref{eqn:kelvinIndicator} we chose $\tau\in[1,5]$, in intervals of length 0.1, and compute $I(\tau,\xibf)$ for $\xi_1\in[-4,4]$ in steps of length 0.05.

We have observed that the choice of the probing line $\Gamma_\epsilon$, i.e. the tip of the probing discs, is crucial. There are two key observations to take into account: closer to the domain will have a good resolution in the inner part of the domain but produces oscillations closer to the corners of $\Omega$; larger distances lead to a regularizing effect that smooths large oscillations but will reduce the accuracy of the detected tip position. Taking these observations into account we have implemented a varying distance of $\Gamma_\epsilon$ to have good resolution in the middle and little oscillation close to the corners. That is we set $\Gamma_\epsilon(\xi_1)=(\xi_1,\min(2,\max(|\xi_1|/2.5,0.5)))$.

\begin{figure}[ht!]
\centering
\begin{picture}(300,365)
\put(0,300){\includegraphics[width=300pt]{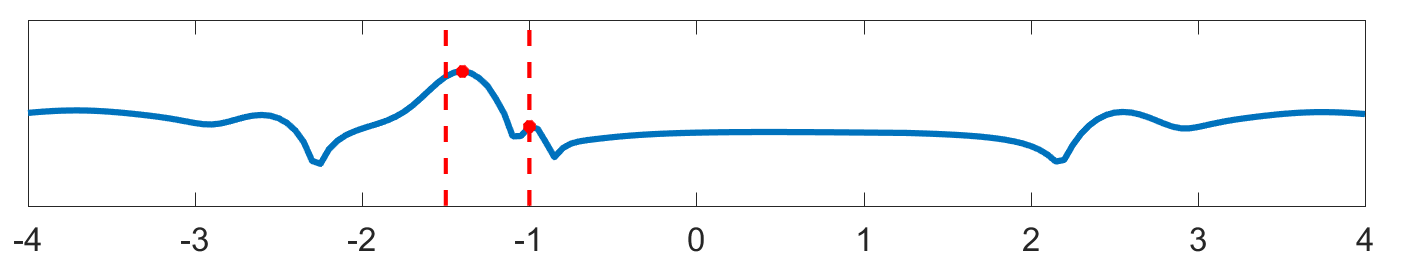}}
\put(0,240){\includegraphics[width=300pt]{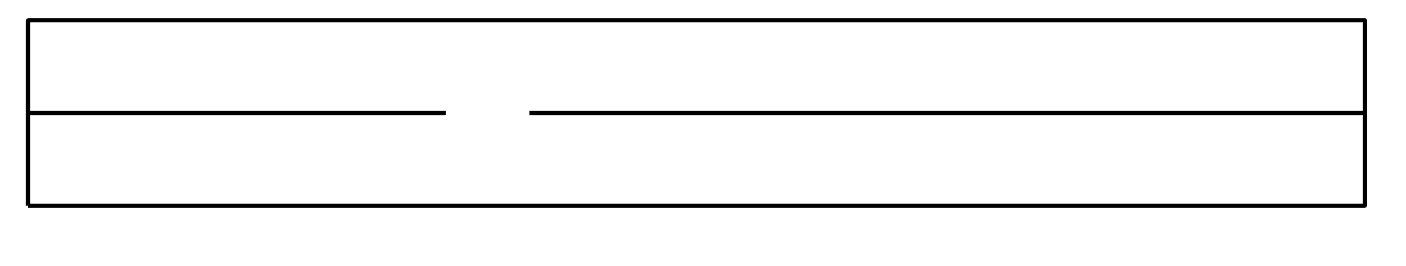}}

\put(0,175){\includegraphics[width=300pt]{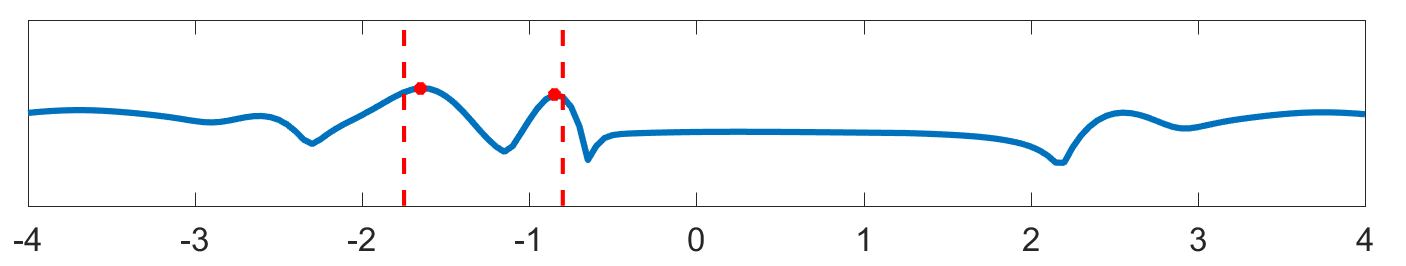}}
\put(0,115){\includegraphics[width=300pt]{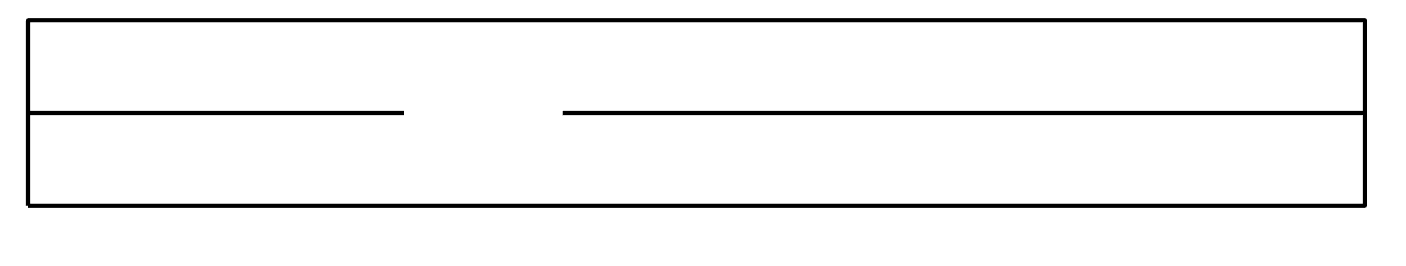}}

\put(0,50){\includegraphics[width=300pt]{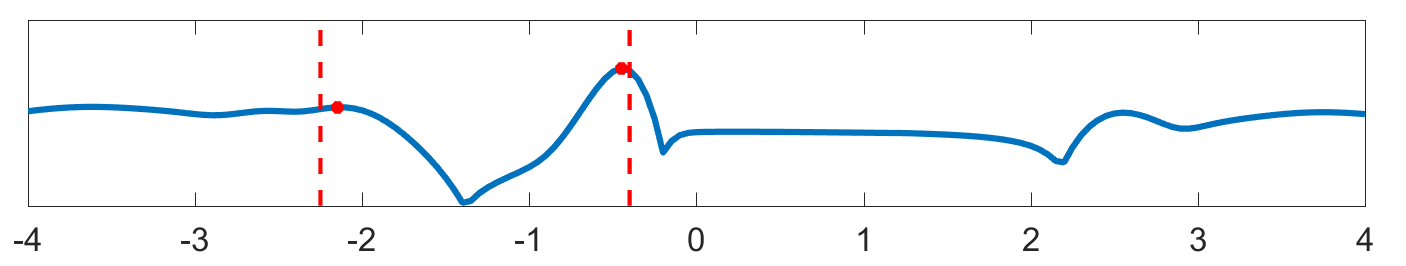}}
\put(0,-10){\includegraphics[width=300pt]{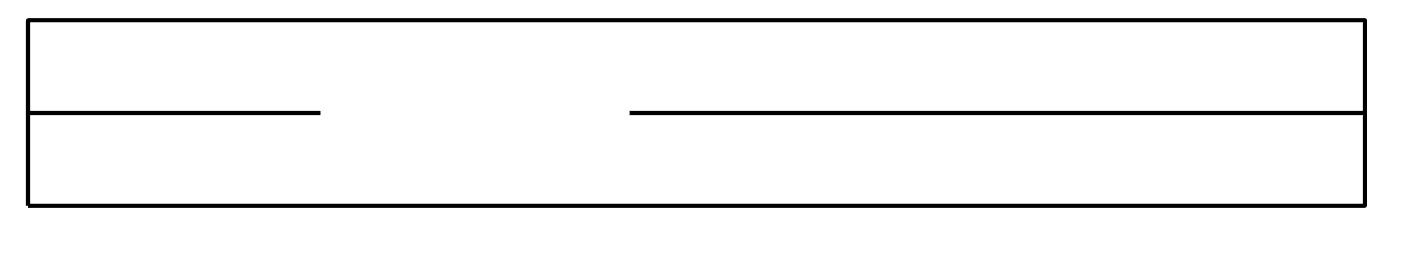}}
\put(120,108){Final stage}
\put(108,233){Intermediate stage}
\put(120,358){Initial stage}

\end{picture}
\caption{\label{fig:firstGap} Computational results for RSW with pressure point $\xbf^*=-1.25$ and a desired gap length $1.5$. Displayed are the profiles obtained from the Kelvin transformed enclosure method paired with the corresponding domain and current stage of the gap. The red lines indicate the true position of the crack tips and the red dots indicate the estimated position.}
\end{figure}

\begin{figure}[ht!]
\centering
\begin{picture}(300,360)
\put(0,300){\includegraphics[width=300pt]{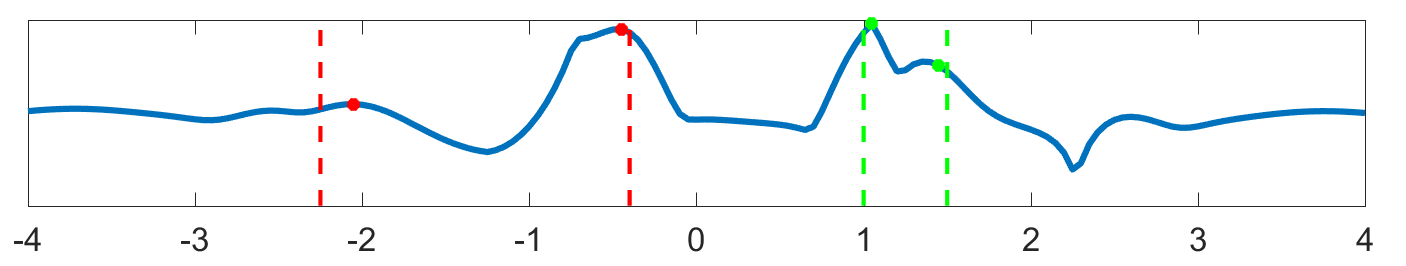}}
\put(0,240){\includegraphics[width=300pt]{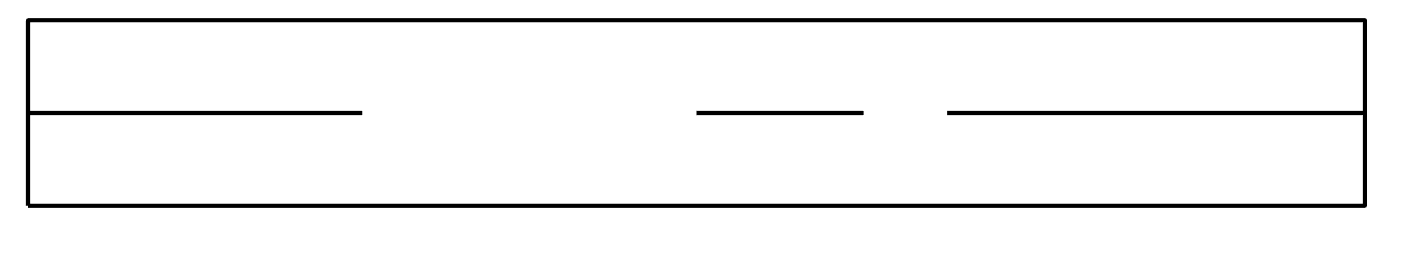}}

\put(0,175){\includegraphics[width=300pt]{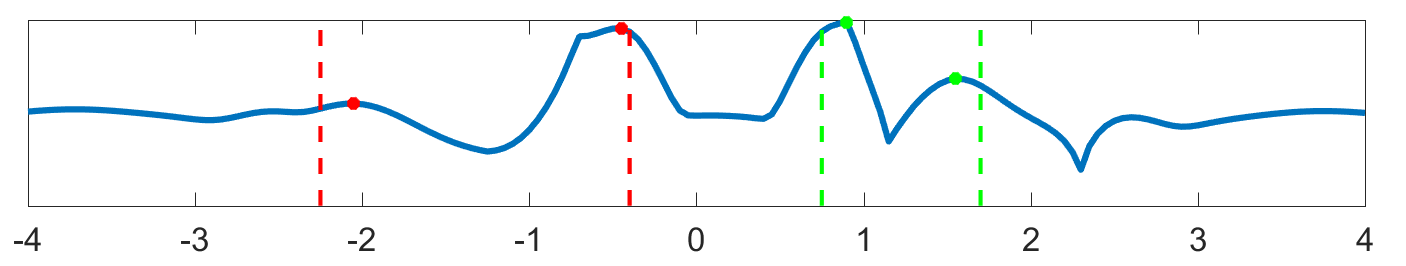}}
\put(0,115){\includegraphics[width=300pt]{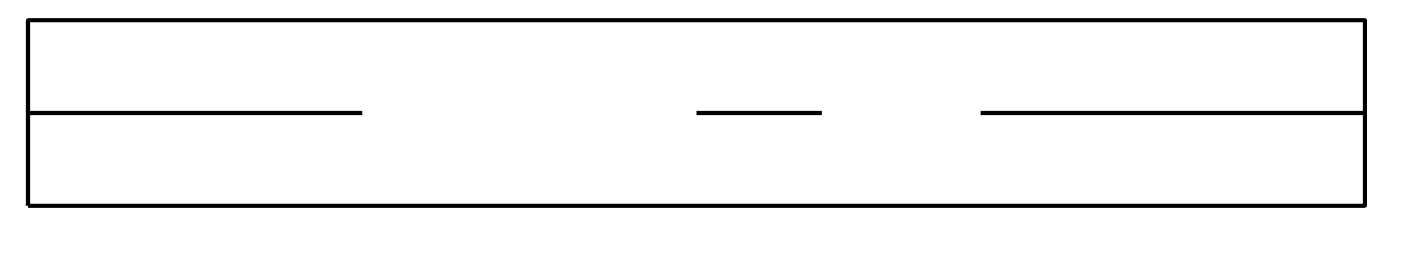}}

\put(0,50){\includegraphics[width=300pt]{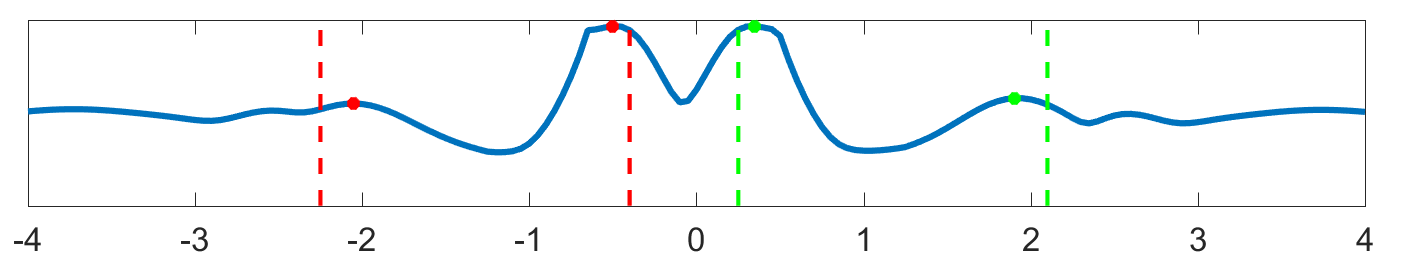}}
\put(0,-10){\includegraphics[width=300pt]{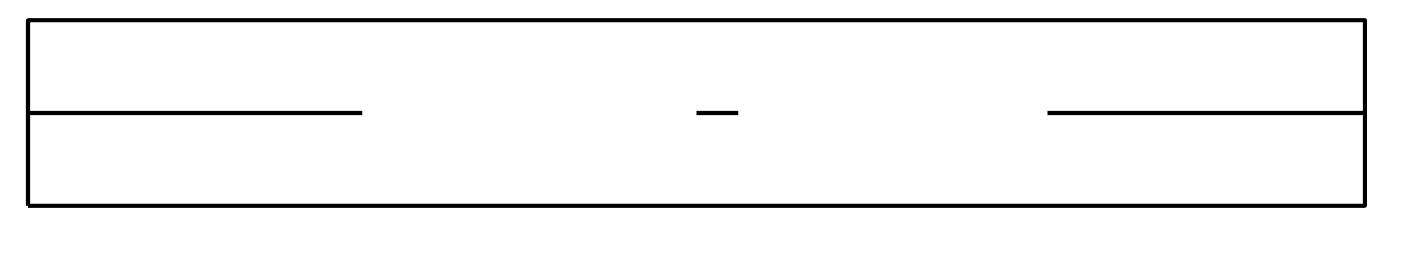}}

\put(120,108){Final stage}
\put(108,233){Intermediate stage}
\put(120,358){Initial stage}

\end{picture}
\caption{\label{fig:secondGap} Computational results for RSW with the second pressure point $\xbf^*=1.25$ and a desired gap length $1.5$. Displayed are the profiles obtained from the Kelvin transformed enclosure method paired with the corresponding domain and current stage of the gap. The green lines indicate the true position of the crack tips and the green dots indicate the estimated position. The red lines and dots indicate the result from the previous pressure point.}
\end{figure}

\subsection{A monitoring example}
We now present a simulated example of a possible monitoring procedure during RSW. We want to join the two metal slabs described above by performing RSW at two pressure points chosen as $\pm1.25$ and a desired gap length of $x_R-x_L=1.5$. In Figure \ref{fig:firstGap}, we illustrate the results obtained by Algorithm \ref{alg:monitor} for three states during the welding process. The first two images show the beginning stage with a small gap, then the second two an intermediate stage while the gap grows larger and the last stage, when the gap is larger than the desired length and the welding process at the first spot can be considered successful. The size of the gap has been increased successively until desired gap length has been exceeded. The evolution of the gap is taken asymmetric and evolves faster to the left in steps of $0.25$ and to the right with size $0.2$.
We then move to a second spot and perform the same procedure there, as illustrated in Figure \ref{fig:secondGap}. The evolution of the gap has been chosen in the same asymmetric way to left and right.

\subsection{Robustness to noise}
In order to investigate robustness to measurement noise, we have added normally distributed random noise to the simulated traces $u|_{\bndry}$. The noise level was chosen as $2\cdot 10^{-4}$ relative to the maximum value of $u|_{\bndry}$, which is a typical noise level in Electrical Impedance Tomography. We have observed that in order to deal with the noise, we need to increase the minimal distance of $\Gamma_\epsilon$ to the domain in order to deal with additional oscillations, specifically we set $\Gamma_\epsilon(\xi_1)=(\xi_1,\min(2,\max(|\xi_1|/2.5,0.75)))$ in the noisy case. This observations emphasizes the regularizing effect of the probing distance. Nevertheless, increasing the probing distance will inevitably lead to a loss of resolution and hence we are not able to resolve the small gaps independently anymore and the appearance of the profile becomes overall smoother with some noise components, as can be seen in Figure \ref{fig:noiseGap}.

\begin{figure}[ht!]
\centering
\begin{picture}(300,180)
\put(0,120){\includegraphics[width=300pt]{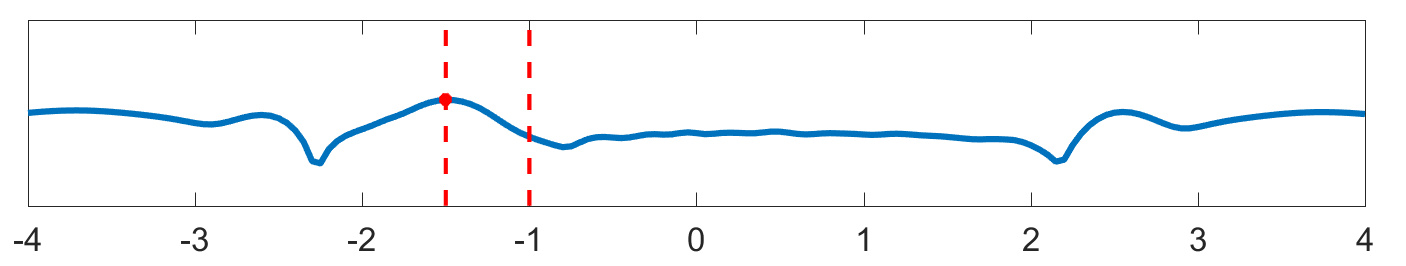}}
\put(0,55){\includegraphics[width=300pt]{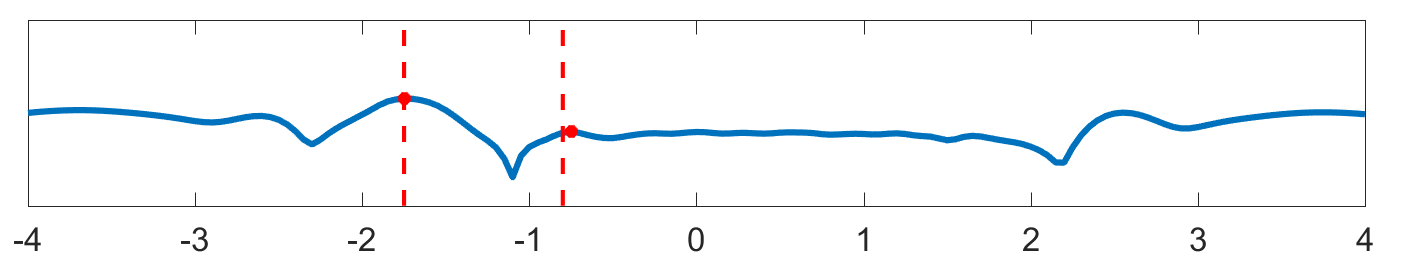}}
\put(0,-10){\includegraphics[width=300pt]{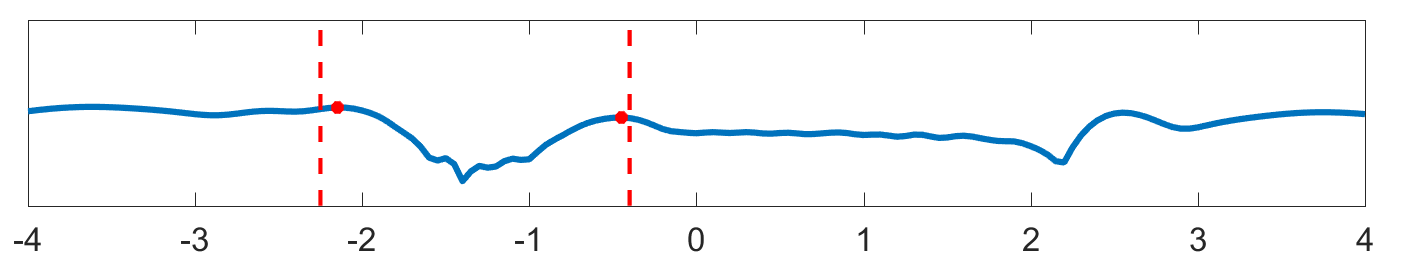}}
\put(120,48){Final stage}
\put(108,113){Intermediate stage}
\put(120,178){Initial stage}

\end{picture}
\caption{\label{fig:noiseGap} Computational results for RSW with pressure point $\xbf^*=-1.25$ and additional relative measurement noise of $2\cdot 10^{-4}$. Displayed are the profiles obtained from the Kelvin transformed enclosure method for the same experiment as in Figure \ref{fig:firstGap}. The red lines indicate the true position of the crack tips and the red dots indicate the estimated position.}
\end{figure}

\subsection{Discussion}
As illustrated in the computational examples we can \linebreak clearly identify the tips of the evolving cracks during the process of resistance spot welding. Even though the position cannot be determined perfectly, the maxima of the profile function give a good indicator of the evolved crack. The estimated locations of the maxima are shown in Table \ref{table:estimateError} and indicate that the procedure is also stable under noise. Due to higher regularization needed (by increasing the distance of the probing function), we are not able to resolve both crack tips for the starting location and get only one maxima in the welding region.

One key observation is that, if the gap is well inside the domain, it is easier to localize than if it is close to the corners of the domain $\Omega$, as is evident in the bottom profile of Figure \ref{fig:secondGap}. Whereas the exact effect causing this is not clear at the current stage, this might be due to interference with the boundary.

\begin{table}[h] 
{\centering
\scriptsize
  \caption{Results of crack tip estimation}
    \begin{tabular}{l l|c|c|c} 
&left gap&{\sc start } &{\sc middle} & {\sc end} \\
    \hline
    \hline 
 & {\sc correct location}    & (-1.50,-1.00) &    (-1.75,-0.80) & (-2.25,-0.40 )   \\
 & {\sc estimated location}  & (-1.40,-1.00) & (-1.65,-0.85) & (-2.15,-0.45)\\   
  & {\sc noisy estimate}  & (-1.50,{\ \ } -- \ ) & (-1.75,-0.75) & (-2.15,-0.45)\\   
     \\
&right gap &  &  &  \\
     \hline
    \hline 
 & {\sc correct location}    & (1.50,1.00) &    (1.70,0.75) & (2.10,0.25)   \\
 & {\sc estimated location}  & (1.35,1.05) &    (1.55,0.90) & (1.90,0.35) \\ 
  & {\sc noisy estimate}  & (1.45,{\ } -- \ ) &    (1.65,0.75) & (1.90,0.20) \\ 

    \end{tabular}%
  \label{table:estimateError} }
\end{table}%

\section{Conclusions}\label{sec:conclusions}
We have shown that the Kelvin transformed enclosure method can be successfully utilized to detect cracks in conductive media with a single measurement. The computational probing procedure relies on creating a profile of the domain, where tips of the cracks are indicated as local maxima. We have also justified this probing procedure of the domain with new theoretical results, that provide a theoretical motivation for our approach.

Initial experiments for an idealized case of resistance spot welding show the effectiveness to locate crack tips inside the domain. Even though the tips can not be perfectly recovered, we are able to monitor the development of the gap during the welding process successfully. 

This study provides the theoretical basis for a monitoring procedure for quality control during welding. In future research we will concentrate on extending the theory to the three dimensional case as well as providing a more realistic computational example that takes geometric restrictions of electrodes located at the boundary into account.

\section*{Acknowledgment}
Authors acknowledge support from Academy of Finland through the \linebreak Finnish Centre of Excellence in Inverse Problems Research
2012--2017 (decision number 284715) and the Finnish Centre of Excellence in Inverse Modelling and
Imaging 2018--2025 (decision numbers 312119 and 312339).
MI was partially supported by Grant-in-Aid for Scientific Research (C)(No. 17K05331) of Japan
Society for the Promotion of Science.
HI was partially supported by Grant-in-Aid for Scientific Research (C)(No. 18K03380) of Japan
Society for the Promotion of Science.

\bibliographystyle{siam}
\bibliography{Inverse_problems_references_2018}

\begin{thebibliography}{10}

\bibitem{Alessandrini1997}
{\sc G.~Alessandrini and E.~DiBenedetto}, {\em Determining 2-dimensional cracks
  in 3-dimensional bodies: Uniqueness and stability}, Indiana University
  Mathematics Journal, 46 (1997), pp.~1--82.

\bibitem{Alessandrini2013}
{\sc G.~Alessandrini and E.~Sincich}, {\em Cracks with impedance; stable
  determination from boundary data}, Indiana University Mathematics Journal, 62
  (2013), pp.~947--989.

\bibitem{Andrieux1996}
{\sc S.~Andrieux and B.~A. Abda}, {\em Identification of planar cracks by
  complete overdetermined data: inversion formulae}, Inverse Problems, 12
  (1996), pp.~553--563.

\bibitem{Aparicio1996}
{\sc N.~D. Aparicio and M.~K. Pidcock}, {\em The boundary inverse problem for
  the {L}aplace equation in two dimensions}, Inverse Problems, 12 (1996),
  pp.~565--577.

\bibitem{Boukari2013}
{\sc Y.~Boukari and H.~Haddar}, {\em The factorization method applied to cracks
  with impedance boundary conditions}, Inverse Problems and Imaging, 7 (2013),
  pp.~1123--1138.

\bibitem{Bruhl2000}
{\sc M.~Br\"uhl and M.~Hanke}, {\em Numerical implementation of two
  non-iterative methods for locating inclusions by impedance tomography},
  Inverse Problems, 16 (2000), pp.~1029--1042.

\bibitem{Bruhl2001b}
{\sc M.~Br\"uhl, M.~Hanke, and M.~Pidcock}, {\em Crack detection using
  electrostatic measurements}, Mathematical Modelling and Numerical Analysis,
  35 (2001), pp.~595--605.

\bibitem{Isakov1995}
{\sc A.~R. Elcrat, V.~Isakov, and O.~Neculoiu}, {\em On finding a surface crack
  from boundary measurements}, Inverse problems, 11 (1995), pp.~343--351.

\bibitem{Eller1996}
{\sc M.~Eller}, {\em Identification of cracks in three-dimensional bodies by
  many boundary measurements}, Inverse Problems, 12 (1996), p.~395.

\bibitem{Friedman1989a}
{\sc A.~Friedman and M.~Vogelius}, {\em Determining cracks by boundary
  measurements}, Indiana University Mathematics Journal, 38 (1989),
  pp.~527--556.

\bibitem{Grisvard1985}
{\sc P.~Grisvard}, {\em Elliptic Problems in Nonsmooth Domains}, Pitman
  Advanced Publishing Program, 1985.

\bibitem{Ikehata1999}
{\sc M.~Ikehata}, {\em Enclosing a polygonal cavity in a two-dimensional
  bounded domain from {C}auchy data}, Inverse Problems, 15 (1999),
  pp.~1231--1241.

\bibitem{Ikehata2000c}
{\sc M.~Ikehata}, {\em Reconstruction of the support function for inclusion
  from boundary measurements}, Journal of Inverse and Ill-Posed Problems, 8
  (2000), pp.~367--378.

\bibitem{Ikehata2003b}
{\sc M.~Ikehata}, {\em Complex geometrical optics solutions and inverse crack
  problems}, Inverse Problems, 19 (2003), pp.~1385--1405.

\bibitem{Ikehata2006}
\leavevmode\vrule height 2pt depth -1.6pt width 23pt, {\em Inverse crack
  problem and probe method}, Cubo, 8 (2006), pp.~29--40.

\bibitem{Ikehata2016}
{\sc M.~Ikehata, H.~Itou, and A.~Sasamoto}, {\em The enclosure method for an
  inverse problem arising from a spot welding}, Mathematical Methods in the
  Applied Sciences, 39 (2016), pp.~3565--3575.

\bibitem{Ikehata2003c}
{\sc M.~Ikehata and G.~Nakamura}, {\em Reconstruction formula for identifying
  cracks}, Journal of Elasticity, 71 (2003), pp.~59--72.

\bibitem{Ikehata2002}
{\sc M.~Ikehata and T.~Ohe}, {\em A numerical method for finding the convex
  hull of inclusions using the enclosure method}, in Electromagnetic
  Nondestructive Evaluation (VI), Studies in Applied Electromagnetics and
  Mechanics, IOS Press, 2002, pp.~21--28.

\bibitem{Ikehata2002c}
{\sc M.~Ikehata and T.~Ohe}, {\em A numerical method for finding the convex
  hull of polygonal cavities using enclosure method}, Inverse Problems, 18
  (2002), pp.~111--124.

\bibitem{Ikehata2008}
\leavevmode\vrule height 2pt depth -1.6pt width 23pt, {\em The enclosure method
  for an inverse crack problem and the {M}ittag-{L}effler function}, Inverse
  Problems, 24 (2008), p.~015006 (27pp).

\bibitem{Ikehata2000a}
{\sc M.~Ikehata and S.~Siltanen}, {\em Numerical method for finding the convex
  hull of an inclusion in conductivity from boundary measurements}, Inverse
  Problems, 16 (2000), pp.~1043--1052.

\bibitem{Ikehata2004}
{\sc M.~Ikehata and S.~Siltanen}, {\em Electrical impedance tomography and
  {M}ittag-{L}effler's function}, Inverse Problems, 20 (2004), pp.~1325--1348.

\bibitem{Olver1997}
{\sc F.~Olver}, {\em Asymptotics and special functions}, AK Peters/CRC Press,
  1997.

\end{thebibliography}
\end{document}